\patchcmd\Gread@eps{\@inputcheck#1 }{\@inputcheck"#1"\relax}{}{}
\newtheorem{thm}{Theorem}[section]
\newtheorem{cor}[thm]{Corollary}
\newtheorem{lem}[thm]{Lemma}
\newtheorem{prop}{Proposition}
\newtheorem{conj}[thm]{Conjecture}
\newtheorem{claim}{Claim}
\begin{document}

\title{Rainbow independent sets in graphs with maximum degree two
\thanks{The work was supported by NNSF of China (No. 12071453) and  Anhui Initiative in Quantum Information Technologies (AHY150200) and National Key Research and Development Project (SQ2020YFA070080).}
}
\author{Yue Ma$^a$, \quad Xinmin Hou$^{a,b}$,\quad Jun Gao$^a$,\quad Boyuan Liu$^a$, Zhi Yin$^a$\\
\small $^a$School of Mathematical Sciences\\
\small University of Science and Technology of China, Hefei, Anhui 230026, China.\\
\small $^{b}$ CAS Key Laboratory of Wu Wen-Tsun Mathematics\\
\small University of Science and Technology of China, Hefei, Anhui 230026, China.\\
\small xmhou@ustc.edu.cn
}

\date{}

\maketitle

\begin{abstract}
Given a graph $G$, let $f_{G}(n,m)$ be the minimal number $k$ such that every $k$ independent $n$-sets in $G$ have a rainbow $m$-set. Let $\mathcal{D}(2)$ be the family of all graphs with maximum degree at most two. Aharoni et al. (2019) conjectured that (i) $f_G(n,n-1)=n-1$ for all graphs $G\in\mathcal{D}(2)$ and (ii) $f_{C_t}(n,n)=n$ for $t\ge 2n+1$. Lv and Lu (2020) showed that the conjecture (ii) holds when $t=2n+1$. In this article, we show that the conjecture (ii) holds for $t\ge\frac{1}{3}n^2+\frac{44}{9}n$. Let $C_t$ be a cycle of length $t$ with  vertices being arranged in a clockwise order. An ordered set $I=(a_1,a_2,\ldots,a_n)$ on $C_t$ is called  a $2$-jump independent $n$-set of $C_t$ if $a_{i+1}-a_i=2\pmod{t}$ for any $1\le i\le n-1$. We also show that a collection of 2-jump independent $n$-sets $\mathcal{F}$ of $C_t$ with $|\mathcal{F}|=n$ admits a rainbow independent $n$-set, i.e. (ii) holds if we restrict $\mathcal{F}$ on the family of 2-jump independent $n$-sets. Moreover, we prove that if the conjecture (ii) holds, then  (i) holds for all graphs $G\in\mathcal{D}(2)$ with $c_e(G)\le 4$, where $c_e(G)$ is the number of components of $G$ isomorphic to cycles of even lengths.
\end{abstract}

\section{Introduction}
Let $\mathcal{F}=(F_1,F_2,...,F_n)$ be a collection of sets (not necessarily distinct), a {\it (partial) rainbow set} of $\mathcal{F}$ is a set $R=\{x_{i_1},x_{i_2},...,x_{i_m}\}$ such that  $x_{i_j}\in F_{i_j}$ for  $1\le i_1< i_2<\ldots< i_k\le m\le n$. Given a graph $G$
and a collection $\mathcal{F}$ of independent sets of $G$, a rainbow set $R$ of $\mathcal{F}$ is called a {\it rainbow independent set} of $(\mathcal{F}, G)$ if $R$ is also an independent set of $G$.
An {\it $m$-set} is a set of size $m$. Let $\mathcal{I}$ be a family of sets. Write $\mathcal{F}\sqsubseteq \mathcal{I}$ for a collection of sets  (not necessarily distinct) $\mathcal{F}$ with each member of $\mathcal{F}$ belonging to $\mathcal{I}$.
Given a graph $G$ and an integer $n$, write $\mathcal{I}(G)$ (resp. $\mathcal{I}_n(G)$ or $\mathcal{I}_{n^+}(G)$) for the family of independent sets (resp. independent sets such that each of them has uniform size $n$ or of size at least $n$) of $G$.
Given a graph $G$ and integers $m,n$ with $m\le n$, define
$$f_G(n,m)=\min\{|\mathcal{F}| : \mathcal{F}\sqsubseteq \mathcal{I}_n(G) \mbox{ and $(\mathcal{F}, G)$ has a rainbow indepent $m$-set}\}.$$
Given a family $\mathcal{G}$ of graphs, let
$$f_{\mathcal{G}}(n,m)=\sup\{f_G(n,m) : G\in\mathcal{G}\}.$$

Clearly, a rainbow matching in a graph is a rainbow independent set in its line graph. Partially motivated by the study of the rainbow matching problem in graphs (there are fruitful results about the problem, one can refer~\cite{AB09,ABCHS19,ABKK21,AHJ19,AKZ18,BGS17,D98} for more details), Aharoni, Briggs, Kim and Kim studied the rainbow independent set problem in certain classes of graphs and proposed several conjectures in~\cite{ABKK19}.

Let $m\le n$. Clearly,
\begin{eqnarray}
f_{G}(n,m)  &\ge & m. \label{EA: e1}
\end{eqnarray}
Aharoni, et al.~\cite{ABKK19} conjectured that the lower bound in~(\ref{EA: e1}) is tight for $f_G(n,n-1)$ with $\Delta(G)\le 2$ and $f_{C_t}(n,n)$, where $C_t$ is a cycle with $t$ vertices.
Write $\mathcal{D}(2)$ for the family of graphs with maximum degree two.

\begin{conj}[Conjecture 2.14 in~\cite{ABKK19}]\label{CONJ: 1}
 $f_{\mathcal{D}(2)}(n, n-1)=n-1$.
\end{conj}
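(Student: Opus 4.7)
\textbf{Proof plan for Conjecture~\ref{CONJ: 1}.}
The lower bound $f_{\mathcal{D}(2)}(n,n-1)\ge n-1$ is immediate from~(\ref{EA: e1}). For the upper bound, given $G\in\mathcal{D}(2)$ and $\mathcal{F}=(F_1,\ldots,F_{n-1})\sqsubseteq\mathcal{I}_n(G)$, we must produce a rainbow independent $(n-1)$-set. Since $\Delta(G)\le 2$, $G$ is a disjoint union of paths and cycles, so each $F_i$ splits as $F_i=\bigsqcup_k F_i^{(k)}$ across the components with $\sum_k |F_i^{(k)}|=n$. The plan has two parts: first, settle the single-component case; second, lift this to arbitrary $G\in\mathcal{D}(2)$ by induction on the number of (even) cycle components.

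For a single cycle $C_t$, I would exploit the following ``dummy set'' reduction: whenever $f_{C_t}(n,n)=n$, one has $f_{C_t}(n,n-1)\le n-1$, by appending any independent $n$-set as $F_n$, finding a rainbow $n$-set of $(F_1,\ldots,F_n)$ guaranteed by Conjecture (ii), and discarding the element chosen from $F_n$. This handles $C_t$ for $t\ge \tfrac{1}{3}n^2+\tfrac{44}{9}n$ by the paper's main result; for $t<2n$ the family $\mathcal{I}_n(C_t)$ is empty, so the statement is vacuous; and for $2n\le t<\tfrac{1}{3}n^2+\tfrac{44}{9}n$ one can work directly with the rigid structure of the independent $n$-sets (each is essentially a shift of the $2$-jump pattern), which is exactly the flavour of the $2$-jump result announced in the abstract. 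For a single path $P_s$, the independence complex is contractible, and a defect-SDR / Hall-type argument on the two ``types'' of vertex (odd and even position) should suffice.

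To lift from one component to all of $\mathcal{D}(2)$, I would induct on the number of components, using the abstract's reduction (Conjecture~(ii) implies Conjecture~\ref{CONJ: 1} for $c_e(G)\le 4$) as the base of the induction. The remaining task is to strip off a single even cycle $C_{2m}$ when $c_e(G)\ge 5$. Given $G=G'\sqcup C_{2m}$, I would partition $\mathcal{F}$ into those $F_i$ whose restriction to $C_{2m}$ attains the maximum size $m$ (hence is forced to be one of the two complementary maximum independent sets $A,B$ of $C_{2m}$) and those whose restriction is strictly smaller, giving extra flexibility on $G'$. A colour-counting argument should then allow us to pre-commit the choice on $C_{2m}$ for the ``rigid'' $F_i$ and apply the inductive hypothesis to a suitable auxiliary collection on $G'$.

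The main obstacle I anticipate lies in this last step: because every maximum independent set of $C_{2m}$ is one of two complementary sets, many $F_i$ can be simultaneously forced to ``disagree'' on $C_{2m}$, creating parity obstructions that accumulate as $c_e(G)$ grows. Overcoming this will likely require either a topological connectivity bound on $I(G)$ of Meshulam / Aharoni--Berger type, or a global discharging argument that balances parities across several even cycles at once. This is precisely what separates the paper's partial result $c_e(G)\le 4$ from the full Conjecture~\ref{CONJ: 1}, and it is where the bulk of the technical work will lie.
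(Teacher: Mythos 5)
The statement you are attempting is a conjecture that the paper itself does not resolve: the paper proves only Theorem~\ref{THM: Con1} (the case of $2$-regular $G$ with $2n-1\le |V(G)|\le 2n$) and the conditional Theorem~\ref{THM: EQ} ($c_e(G)\le 4$, assuming $f_{C_\ell}(n,n)=n$ for all $\ell\ge 2n+1$, i.e.\ Conjecture~\ref{conj}, which is itself known only for $\ell=2n+1$ and $\ell>\frac{1}{3}n^2+\frac{44}{9}n$). Your proposal likewise does not close the conjecture: the decisive step---stripping an even cycle off a graph with five or more even-cycle components---is explicitly deferred to an unspecified ``topological connectivity bound or global discharging argument,'' and your intended base case is the paper's conditional $c_e(G)\le 4$ result, so the whole plan is conditional on Conjecture~\ref{conj} as well. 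What you have is a programme whose hard core coincides exactly with what is open, not a proof.

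Two concrete points. First, your single-cycle analysis is both overcomplicated and partly wrong: the claim that for $2n\le t<\frac{1}{3}n^2+\frac{44}{9}n$ every independent $n$-set of $C_t$ is ``essentially a shift of the $2$-jump pattern'' fails as soon as $t\ge 2n+2$, since the $n$ gaps can exceed $2$ in several places; this middle range is precisely the part of Conjecture~\ref{conj} left open after Theorems~\ref{THM: Lu} and~\ref{large}. Moreover, for the $(n-1)$-target on one cycle the detour through $f_{C_t}(n,n)=n$ is unnecessary: Corollary~\ref{no0}~(A) gives $f_{C_t}(n,n-1)=n-1$ for every $t\ge 2n$ by running the greedy algorithm on the path $C_t-v$, and this (together with the chordal-graph result behind Corollary~\ref{COR: f_P(n,m)}) is how the paper disposes of single components. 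Second, the place where the full-strength cycle statement genuinely enters is your lifting step: after colours are pre-committed on the other components, the residual demand on an even cycle $C_{2n_i}$ can be a rainbow independent $n_i$-set drawn from only $n_i$ lists (not $2n_i-1$, so Proposition~\ref{PROP: p1} does not apply), which is exactly why Theorem~\ref{THM: EQ} must invoke Conjecture~\ref{conj} and why its bookkeeping via $h(\mathcal{I},S)$ and Lemma~\ref{LEM: 2cycles} already becomes delicate at $k=4$. Your ``colour-counting argument should then allow us to pre-commit the choice on $C_{2m}$'' is precisely the missing content, and nothing in the proposal indicates how to overcome the parity obstructions you yourself identify.
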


\begin{conj}[Conjecture 2.9 in~\cite{ABKK19}]\label{conj}
 If $t\ge 2n+1$, then $f_{C_t}(n,n)=n$.
\end{conj}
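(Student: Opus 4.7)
My plan is to attack Conjecture~\ref{conj} by induction on $n$, using the Lv--Lu endpoint $t=2n+1$ as a secondary base case and leveraging the 2-jump result announced in the paper's abstract. Fix $\mathcal{F}=(F_1,\ldots,F_n)$ with each $F_j\in\mathcal{I}_n(C_t)$ and $t\ge 2n+1$. For any $F\in\mathcal{I}_n(C_t)$, writing its cyclic gaps as $g_1,\ldots,g_n$ with $g_j\ge 2$ and $\sum g_j=t$, the slack $t\ge 2n+1$ forces at least one $g_j\ge 3$ unless $F$ is (essentially) 2-jump, which would require $t=2n$ and is excluded. This ``big gap'' is the source of flexibility I would exploit throughout.

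The main inductive step is a color-deletion reduction. I would choose $i\in[n]$ and $v\in F_i$ adjacent on $C_t$ to a big gap of $F_i$, delete $N[v]=\{v-1,v,v+1\}$ to obtain a path $P$ of order $t-3$, and apply a suitable path version of the inductive hypothesis to the restricted sets $F_j\setminus N[v]$ for $j\ne i$. If each such restricted set still contains an independent $(n-1)$-set of $P$, I obtain a rainbow independent $(n-1)$-set $R$, and then $R\cup\{v\}$ is the desired rainbow independent $n$-set. A double-counting argument over all admissible pairs $(i,v)$ should furnish a choice for which the ``loss'' from each remaining $F_j$ is at most one element, since $\sum_{v\in F_i}|N[v]\cap F_j|$ is bounded by $2|F_j|$ plus a correction for $F_j\cap F_i$.

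The chief obstacle is that a single $F_j$ can lose up to three elements when $N[v]$ is deleted, because $\{v-1,v+1\}$ is independent in $C_t$ (so both endpoints of $N[v]$ can lie in $F_j$) and $v$ itself may additionally lie in $F_j$. To handle this I would develop a refined ``defect'' variant: delete only $v$ and record $\{v-1,v+1\}\cap F_j$ as a forbidden list for color $j$, reducing the problem to a constrained rainbow independent set problem on $C_t-v\cong P_{t-1}$. The residual case in which no choice of $(i,v)$ avoids excessive loss is exactly when every $F_j$ is a 2-jump independent $n$-set, and in that case the paper's own 2-jump result is invoked as a black box. Gluing the generic reduction to the 2-jump black box would, in principle, close the induction and settle Conjecture~\ref{conj}.

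The step I expect to be the main difficulty is the accounting that keeps the threshold sharp. It is easy to design a reduction that loses a constant and recovers only $t\ge 2n+c$ for some $c\ge 2$; preserving the exact ``$+1$'' in $2n+1$ appears to require either a parity argument tailored to odd-length cycles or a global averaging over colors in place of a vertex-by-vertex greedy choice. I suspect this sharp-threshold bookkeeping is the genuine obstruction that has kept Conjecture~\ref{conj} open and is the reason the present paper only achieves the quadratic bound $t\ge\frac{1}{3}n^2+\frac{44}{9}n$ instead of the conjectured linear threshold.
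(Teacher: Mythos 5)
You are attempting to prove Conjecture~\ref{conj} itself, which this paper explicitly leaves open: the paper only establishes the endpoint $t=2n+1$ (Theorem~\ref{THM: Lu}, due to Lv and Lu), the range $t>\frac{1}{3}n^2+\frac{44}{9}n$ (Theorem~\ref{large}), and the restricted setting in which every member of $\mathcal{F}$ is a $2$-jump set (Theorem~\ref{+2}). So there is no proof in the paper to compare yours against; if your argument worked it would supersede the paper's main results. It does not work, because its two load-bearing steps are asserted rather than proved, and each has a concrete defect.

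First, the double count does not deliver what you need. For fixed $i$ and $j\neq i$ one has $\sum_{v\in F_i}|N[v]\cap F_j|=\sum_{u\in F_j}|N[u]\cap F_i|\le 2n$, since independence of $F_i$ only rules out $|N[u]\cap F_i|=3$, not $|N[u]\cap F_i|=2$ (the pair $\{u-1,u+1\}$ is independent). Averaging over the $n$ choices of $v\in F_i$ therefore gives expected loss at most $2$ per color, which is exactly the threshold you must beat; nothing in the computation produces a single $v$ whose loss is at most $1$ for all $n-1$ remaining colors simultaneously, and with the $F_j$ chosen as assorted shifts of $F_i$ the set of ``good'' $v$ for each $j$ can be made to have empty intersection. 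Second, your claimed dichotomy --- that the only obstruction to such a choice is that every $F_j$ is $2$-jump --- is unsupported and appears false: already in $C_{2n+2}$ an independent $n$-set may have two gaps of length $3$ instead of one gap of length $4$, and such sets are not $2$-jump, yet families of them can defeat every choice of $(i,v)$. (You have also misread the definition: a $2$-jump $n$-set of $C_t$ with $t\ge 2n+1$ has $n-1$ gaps equal to $2$ and one gap equal to $t-2n+2\ge 3$; it does not force $t=2n$.) Even granting the dichotomy, Theorem~\ref{+2} requires the \emph{entire} collection to be $2$-jump, so mixed families remain untreated, and your fallback ``defect variant'' on $P_{t-1}$ with forbidden lists is a new constrained problem that Lemma~\ref{path} does not address and for which you give no argument. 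As you concede in your final paragraph, the sharp bookkeeping at $t=2n+1$ is precisely the open difficulty; what you have is a plan of attack, not a proof.
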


The following proposition can be easily checked  (also has been observed in~\cite{ABCHS19,ABKK19,D98}).
\begin{prop}\label{PROP: p1}
$f_{C_{2n}}(n,n)=2n-1$.
\end{prop}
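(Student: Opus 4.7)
The key observation is that $C_{2n}$ is bipartite with the unique bipartition $V(C_{2n}) = A \sqcup B$ where $|A|=|B|=n$, and since the maximum independent set of $C_{2n}$ has size exactly $n$, the family $\mathcal{I}_n(C_{2n})$ consists of precisely two sets: $A$ and $B$. So every member of any $\mathcal{F} \sqsubseteq \mathcal{I}_n(C_{2n})$ equals $A$ or $B$, and any rainbow independent $n$-set is forced to be $A$ or $B$ itself. The proof splits into a lower bound (construction) and an upper bound (pigeonhole) using this reduction.

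For the lower bound $f_{C_{2n}}(n,n)\ge 2n-1$, I would exhibit a collection $\mathcal{F}$ of $2n-2$ independent $n$-sets with no rainbow independent $n$-set: take $n-1$ copies of $A$ together with $n-1$ copies of $B$. To produce a rainbow set equal to $A$, one must select a distinct element of $A$ from each of $n$ members of $\mathcal{F}$, but $A\cap B=\emptyset$ means only the copies of $A$ can contribute, and there are only $n-1$ of them. The same argument rules out $B$. Hence no rainbow independent $n$-set exists.

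For the upper bound $f_{C_{2n}}(n,n)\le 2n-1$, take any $\mathcal{F}\sqsubseteq \mathcal{I}_n(C_{2n})$ with $|\mathcal{F}|=2n-1$. Since each member is either $A$ or $B$, pigeonhole gives at least $n$ copies of (say) $A$. Choose the $n$ distinct elements of $A$, one from each of $n$ such copies; this yields a rainbow set equal to $A$, which is independent in $C_{2n}$.

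There is no substantial obstacle here; the entire content of the proposition is the structural fact that an even cycle has only two maximum independent sets, after which both bounds are immediate. I would present the argument as two short paragraphs (construction and pigeonhole) preceded by the one-line remark that $\mathcal{I}_n(C_{2n})=\{A,B\}$.
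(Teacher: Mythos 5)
Your proof is correct: the observation that $\mathcal{I}_n(C_{2n})=\{A,B\}$ (the two colour classes), the family of $n-1$ copies of $A$ and $n-1$ copies of $B$ for the lower bound, and the pigeonhole argument for the upper bound together settle the proposition. The paper states this result without proof, calling it easily checked, and your argument is precisely the standard verification it has in mind, so there is nothing further to reconcile.
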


When $t=2n+1$, Conjecture~\ref{conj} was confirmed by Lv and Lu~\cite{Lu20}.
\begin{thm}
[Theorem 1 in~\cite{Lu20}]\label{THM: Lu}
$f_{C_{2n+1}}(n,n)=n$.
\end{thm}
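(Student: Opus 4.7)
The lower bound $f_{C_{2n+1}}(n,n)\geq n$ is immediate from (\ref{EA: e1}), so the content of the theorem is the matching upper bound. My plan is to reduce the problem to a Hall-type matching question by exploiting the rigid structure of independent $n$-sets in $C_{2n+1}$. Since the independence number of $C_{2n+1}$ equals $n$, every independent $n$-set is maximum, and is uniquely determined by its \emph{hole} --- the single edge of $C_{2n+1}$ whose two endpoints are both avoided. Writing $V(C_{2n+1})=\mathbb{Z}_{2n+1}$ and $A_j:=\{j+2,j+4,\ldots,j+2n\}\pmod{2n+1}$ for the maximum independent set with hole $\{j,j+1\}$, the sets $A_0,\ldots,A_{2n}$ exhaust all independent $n$-sets, so any rainbow independent $n$-set we produce must itself equal some $A_k$. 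So given $\mathcal{F}=(A_{j_1},\ldots,A_{j_n})$, the task reduces to finding $k\in\mathbb{Z}_{2n+1}$ and a bijection $\phi\colon A_k\to[n]$ with $v\in A_{j_{\phi(v)}}$ for every $v\in A_k$, equivalently, a perfect matching in the bipartite graph $G_k$ on $A_k\cup[n]$ with edges $\{(v,i):v\in A_{j_i}\}$.

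The key computation I would perform next is the pairwise intersection $|A_k\cap A_j|$: a short residue argument yields $|A_k\cap A_j|=n-t$ when $k-j\equiv 2t\pmod{2n+1}$ and $|A_k\cap A_j|=t$ when $k-j\equiv 2t+1\pmod{2n+1}$ (under the symmetry $d\leftrightarrow 2n+1-d$). In particular $A_k$ and $A_{k+1}$ are \emph{disjoint}, whereas $A_k$ and $A_{k+2}$ agree in $n-1$ elements, so the $2n+1$ maximum independent sets organise themselves --- under the shift $A_k\mapsto A_{k+2}$ --- into a cyclic sequence in which consecutive members swap a single vertex. Hall's condition on $G_k$ then becomes the family of inequalities $|A_k\cap\bigcup_{i\in S}A_{j_i}|\geq|S|$ for every $S\subseteq[n]$, whose behaviour is governed by the cyclic distances from $k$ to the multiset $J=\{j_1,\ldots,j_n\}$ of holes.

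From here I would perform a case analysis on $J$. The easy endpoints: if $J$ consists of a single value $a$, then all input sets coincide with $A_a$ and any bijection $[n]\to A_a$ is already a rainbow independent $n$-set; if $J$ uses three or more distinct values that are spread out on the cycle, Hall's condition already holds for most $k$, because no pair $A_{j_i}, A_{j_{i'}}$ can simultaneously be nearly disjoint from $A_k$. The genuine obstacle --- as the small example $n=2$, $J=(0,1)$ in $C_5$ already shows (there only the single value $k=3$ admits a perfect matching, even though singleton Hall's conditions exclude just four candidates) --- is when $J$ is concentrated on a short arc of $C_{2n+1}$: Hall's condition fails for most $k$, and one must produce a specific good $k$. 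The strategy I would pursue is to choose $k$ so that the hole $\{k,k+1\}$ sits ``antipodally'' to the cluster of $j_i$'s, and then verify Hall's condition by combining the intersection formula with inclusion-exclusion on the critical subsets $S$; the identity $\bigcup_{j\in J}A_j\supseteq V(C_{2n+1})\setminus\bigcap_{j\in J}\overline{A_j}$ together with the rigidity of $\overline{A_j}$ should make the large-$S$ inequalities essentially automatic. As a fallback I would consider an averaging argument, summing $\mathrm{perm}(M^{(k)})$ over $k\in\mathbb{Z}_{2n+1}$ for the biadjacency matrix $M^{(k)}$ of $G_k$, and arguing positivity by a counting identity. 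I expect this final step to be the main difficulty, since in the clustered regime Hall's condition can fail at many choices of $k$ simultaneously and the argument must robustly isolate the good ones.
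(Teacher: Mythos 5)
Your reduction is sound as far as it goes: in $C_{2n+1}$ every independent $n$-set is maximum, hence equals some $A_j=\{j+2,j+4,\ldots,j+2n\}$ determined by its hole edge $\{j,j+1\}$, your intersection formula is correct, and finding a rainbow independent $n$-set is indeed equivalent to finding some $k$ for which the bipartite graph $G_k$ between $A_k$ and the colour indices satisfies Hall's condition. But the proof stops exactly where the theorem actually lives. You never establish that for \emph{every} multiset $J$ of holes there exists a good $k$: the claim that for ``spread out'' $J$ Hall's condition holds for most $k$ is asserted, not proved, and even the top condition $|S|=n$ (that $A_k\subseteq\bigcup_i A_{j_i}$) is far from automatic, since each $A_{j_i}$ covers fewer than half the vertices; your own example $J=(0,1)$ in $C_5$ shows that only one $k$ out of $2n+1$ may survive, so any argument must pinpoint it. The two proposed routes to close this --- choosing $k$ ``antipodal'' to the cluster and verifying Hall via inclusion--exclusion, or an averaging/permanent positivity argument --- are sketched as hopes, with no lemma or computation showing either actually succeeds; as written this is a genuine gap, and you acknowledge as much.

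For context: the paper does not prove this statement at all --- it is quoted as Theorem 1 of Lv and Lu~\cite{Lu20} --- so there is no internal proof to compare against. The closest argument in the paper is the proof of Theorem~\ref{+2}, which handles the related $2$-jump setting by a quite different mechanism: take a maximal rainbow independent set $A$ minimizing the gap vector $D_A$, and use local exchange/recolouring arguments (Claims~\ref{claim:1} and~\ref{claim:3}) to contradict maximality or minimality. If you want to complete your Hall-type approach, you would need a concrete lemma of the form ``for every multiset $J$ there is a hole position $k$ (e.g.\ inside a longest gap of $J$) such that every $S\subseteq[n]$ satisfies $|A_k\cap\bigcup_{i\in S}A_{j_i}|\ge|S|$,'' and prove it by analysing the cyclic distances from $k$ to $J$; alternatively, an exchange argument in the spirit of the paper's Section 5 is likely shorter.
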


In this article, we first show that Conjecture~\ref{CONJ: 1} is true when $G$ is 2-regular and $|V(G)|\in\{ 2n-1, 2n\}$ and then we show that Conjecture~\ref{conj} seems stronger than Conjecture~\ref{CONJ: 1}, i.e. Conjecture~\ref{conj} implies Conjecture~\ref{CONJ: 1} for graphs $G\in\mathcal{D}(2)$ with $c_e(G)\le 4$, where $c_e(G)$ is the number of components of $G$ isomorphic to cycles of even lengths. So we concentrate our attention on Conjecture~\ref{conj} and prove that this  conjecture holds when $t$ is large. The main results of the article are listed below.

\begin{thm}\label{THM: Con1}
If $G$ is 2-regular with $2n-1\le |V(G)|\le 2n$ then $f_G(n, n-1)=n-1$.
\end{thm}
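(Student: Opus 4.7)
The plan is to first reduce to the combinatorially meaningful case. A $2$-regular graph is a disjoint union of cycles, so write $G=\bigsqcup_{i=1}^{c}C_{t_i}$. If $|V(G)|=2n-1$, then $\sum t_i$ is odd, so at least one $t_i$ is odd and $\sum\lfloor t_i/2\rfloor\le n-1<n$; thus $\mathcal{I}_n(G)=\emptyset$ and the statement is vacuous. I therefore focus on $|V(G)|=2n$, where the existence of any independent $n$-set forces every $t_i$ to be even, so $G=\bigsqcup_{i=1}^{c}C_{2s_i}$ with $\sum s_i=n$ and $s_i\ge 2$. Each $C_{2s_i}$ has exactly two maximum independent sets $A_i$ and $B_i$ (its bipartition classes), and every $F\in\mathcal{I}_n(G)$ is the union of one of $A_i$ or $B_i$ per cycle, recorded by a vector $\phi\in\{A,B\}^c$.

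The lower bound $f_G(n,n-1)\ge n-1$ is given by~(\ref{EA: e1}), so the task is to show that any $\mathcal{F}=(F_1,\ldots,F_{n-1})\sqsubseteq\mathcal{I}_n(G)$ admits a rainbow independent $(n-1)$-set. I plan to induct on $c$. For the inductive step ($c\ge 2$), set $P_i=\{j:\phi_j(i)=A\}$ and $Q_i=\{j:\phi_j(i)=B\}$; the key pigeonhole lemma is that some cycle $i^*$ satisfies $\max(|P_{i^*}|,|Q_{i^*}|)\ge s_{i^*}$. Indeed, if not, then $n-1=|P_i|+|Q_i|\le 2(s_i-1)$ forces $s_i\ge (n+1)/2$ for every $i$, so summing over $c\ge 2$ cycles gives $n=\sum s_i\ge c(n+1)/2\ge n+1$, a contradiction. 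Assuming WLOG $|P_{i^*}|\ge s_{i^*}$, I pick $s_{i^*}$ sets from $P_{i^*}$ and match them bijectively with the vertices of $A_{i^*}$; deleting these sets from $\mathcal{F}$ and deleting $C_{2s_{i^*}}$ from $G$ leaves an instance with $c-1$ cycles, $2n'$ vertices, and $n'-1$ independent $n'$-sets, where $n'=n-s_{i^*}$, to which the induction applies.

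The base case $c=1$ is $G=C_{2n}$, where each $F_j$ equals one of the two maximum independent sets $A=\{v_0,v_2,\ldots,v_{2n-2}\}$ or $B=\{v_1,v_3,\ldots,v_{2n-1}\}$. If $a$ of the $F_j$'s equal $A$ and $b=n-1-a$ equal $B$, the explicit set
\[
I=\{v_0,v_2,\ldots,v_{2a-2}\}\cup\{v_{2a+1},v_{2a+3},\ldots,v_{2n-3}\}
\]
is an independent $(n-1)$-set of $C_{2n}$ meeting $A$ in exactly $a$ vertices and $B$ in exactly $b$ vertices (the two junctions $v_{2a-2}v_{2a+1}$ and $v_{2n-3}v_0$ have cyclic distance $3$); any bijection between the $A$-type $F_j$'s and $I\cap A$, together with one between the $B$-type $F_j$'s and $I\cap B$, yields the desired rainbow transversal. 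The main obstacle is isolating the correct pigeonhole quantity in the inductive step, since exhausting a cycle \emph{forces} the corresponding $F_j$'s to agree on their side; once the inequality $\max(|P_{i^*}|,|Q_{i^*}|)\ge s_{i^*}$ is identified, both the induction and the explicit base case construction are routine.
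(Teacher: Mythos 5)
Your proof is correct, and it takes a genuinely different route from the paper's. The paper inducts on the number of components but treats the restrictions $I_j\cap V(C_i)$ as arbitrary maximum independent sets of the cycles: its base case invokes Corollary~\ref{no0} (A) (via the greedy algorithm), its inductive step for a union of even cycles first applies induction to $C_2\cup\ldots\cup C_k$, and then repairs the one deficient cycle by a color-exchange argument combining Proposition~\ref{PROP: p1} ($f_{C_{2n}}(n,n)=2n-1$) with Corollary~\ref{no0} (A); it also devotes a case to odd-cycle components, handled through Theorem~\ref{THM: Lu}. You instead exploit the rigidity of the tight case: with $|V(G)|=2n$ every member of $\mathcal{I}_n(G)$ is a union of bipartition classes, one per even cycle, so the problem becomes a statement about vectors in $\{A,B\}^c$; your pigeonhole inequality $\max(|P_{i^*}|,|Q_{i^*}|)\ge s_{i^*}$ (valid for $c\ge 2$, since otherwise $n-1\le 2s_i-2$ for all $i$ gives $n=\sum s_i\ge n+1$) lets you saturate one cycle completely and recurse, and your explicit $(n-1)$-set settles the base case $C_{2n}$ directly. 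This is more elementary and self-contained, needing none of the Section~2 machinery nor Theorem~\ref{THM: Lu}, and you correctly note what the paper leaves implicit: when $|V(G)|=2n-1$, or when $|V(G)|=2n$ and some component is an odd cycle, $\mathcal{I}_n(G)=\emptyset$, so those cases (including the paper's odd-cycle case) carry no content. The one caveat, shared equally by the paper, is the degenerate convention: the equality $f_G(n,n-1)=n-1$ really presupposes $\mathcal{I}_n(G)\neq\emptyset$ (as Theorem~\ref{THM: EQ} states explicitly), the lower bound being just~(\ref{EA: e1}). What the paper's route buys is reuse of lemmas it needs elsewhere anyway; what yours buys is brevity and transparency in exactly the tight regime this theorem covers.
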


\begin{thm}\label{THM: EQ}
If $f_{C_\ell}(n,n)=n$ for $\ell\ge 2n+1$, then  $f_G(n, n-1)=n-1$ for all graphs $G\in\mathcal{D}(2)$ with $c_e(G)\le 4$, provided that $\mathcal{I}_n(G)\not=\emptyset$.
\end{thm}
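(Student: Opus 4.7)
The lower bound $f_G(n,n-1)\ge n-1$ is immediate from~(\ref{EA: e1}); all the content is in the matching upper bound. My plan rests on three ingredients: a \emph{dummy-set trick} to convert a rainbow $(n-1)$-set problem into a rainbow $n$-set problem, a \emph{structural reduction} from $G$ to a single cycle $C_t$ of length $\ge 2n+1$, and an invocation of the assumed conjecture $f_{C_t}(n,n)=n$. Concretely, given any $\mathcal{F}=(F_1,\dots,F_{n-1})\sqsubseteq\mathcal{I}_n(G)$, use the hypothesis $\mathcal{I}_n(G)\neq\emptyset$ to adjoin an arbitrary $F_n\in\mathcal{I}_n(G)$, obtaining $\mathcal{F}'=(F_1,\dots,F_n)$. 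If I can produce a rainbow independent $n$-set $R$ for $(\mathcal{F}',G)$, then discarding the element of $R$ drawn from $F_n$ yields the required rainbow independent $(n-1)$-set for $(\mathcal{F},G)$.

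To produce $R$, I would decompose $G$ into its components (paths, odd cycles, and at most $c_e(G)\le 4$ even cycles), linearise each cycle component by cutting a single edge, and then arrange the resulting paths consecutively as arcs of an auxiliary cycle $C_t$, separated by ``buffer'' vertices that lie in none of the $F_i$. Using $|V(H)|=2\alpha(H)-1$ for odd paths and $|V(H)|\ge 2\alpha(H)$ for the other component types, the total length is $t=|V(G)|+s=2\alpha(G)+2s_{oc}+s_{ep}+s_{ec}\ge 2n+1$ except in the single degenerate case where every component is an odd path of extremal length with $\alpha(G)=n$; but then each odd path has a unique maximum independent set, which forces $F_1=\cdots=F_{n-1}$ and the conclusion is trivial. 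Each $F_i$ is automatically independent in $C_t$, since buffers are unused and cutting only \emph{removes} $G$-edges.

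The crucial obstacle is the \emph{converse} direction: the rainbow $n$-set returned by the conjecture on $C_t$ may contain both endpoints $u,v$ of a cut edge of $G$ (drawn from two distinct $F_j$'s), so that it is independent in $C_t$ but not in $G$. For an odd cycle component $C_{2k+1}$ one can sidestep this: any maximum independent $k$-set leaves a ``gap of two'' consecutive unchosen vertices, and the rotational freedom in selecting the cut edge, together with a short swapping argument using the surplus $|F_j|=n\gg 1$, lets one always repair such a conflict. For the at most four even cycle components the obstacle is genuine, because both bipartition classes of $C_{2k}$ — the only maximum independent sets — meet every edge, so no cut of an even cycle is automatically safe. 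This is precisely where $c_e(G)\le 4$ enters: I would treat the even cycles by a bounded case analysis branching over the bipartition-class assignments that the $F_i$ make on each even cycle (contracting each even cycle to one fixed class and deleting the other), reducing in every branch to a graph whose only cycle components are odd, where the construction above is conflict-free.

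I expect the bulk of the technical work to lie in this case analysis on even cycles and in verifying that at least one branch yields a rainbow $n$-set in $C_t$ that also respects $G$-independence on every cut edge simultaneously. The constant $4$ appears to be precisely the largest number of even cycle components for which this branching still closes together with the length constraint $t\ge 2n+1$ — any larger $c_e(G)$ would either blow up the branching beyond what can be simultaneously satisfied or destroy the slack in the cycle length.
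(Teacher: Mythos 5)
Your reduction has a fatal flaw at its very first step, and it is located exactly where you admit the obstacle is ``genuine.'' Augmenting $\mathcal{F}$ by a dummy $F_n$ and then seeking a rainbow independent $n$-set of the augmented family \emph{in $G$} (via the auxiliary cycle plus repairs) asks for an object that simply need not exist when even cycles are present, so no swapping or branching can recover it. Concretely, let $G=C_6$, $n=3$, $\mathcal{F}=(X,Y)$ with $X,Y$ the two bipartition classes, and adjoin $F_3=X$. The only independent $3$-sets of $C_6$ are $X$ and $Y$, and neither can be rainbow-coloured from $(X,Y,X)$, so the augmented family has no rainbow independent $3$-set in $G$ at all --- consistent with Proposition~\ref{PROP: p1}, which says $n$ sets never suffice on $C_{2n}$ --- even though the desired rainbow independent $2$-set of $(X,Y)$ exists (e.g.\ $\{1,4\}$). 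In your auxiliary cycle the conjecture does return a rainbow $3$-set, but in this example every such set must use both endpoints of the cut edge, and it is unrepairable because the target does not exist in $G$. So the cut-edge conflict on even cycles is not a technical nuisance to be branched away; it shows the dummy-set framing itself is wrong, and any proof must instead exploit the slack of aiming only for $n-1$ vertices.

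Even apart from this, both repair devices are asserted rather than proved: you have no control over which rainbow set the conjecture hands back, the ``rotational freedom'' in choosing cut edges is spent before the conjecture is invoked, and the branching ``over bipartition-class assignments'' is not even well defined, since the trace of an independent $n$-set on an even-cycle component is in general smaller than half that cycle and need not lie inside a single class; nor does the proposal ever show where $c_e(G)\le 4$ is genuinely used. The paper's route is quite different: it takes a minimal counterexample and works with $(n-1)$-sets directly, first eliminating path and odd-cycle components using the hypothesis together with Corollary~\ref{COR: f_P(n,m)}, then introducing the parameter $h(\mathcal{I},S)$ and Lemma~\ref{LEM: 2cycles} to split the family across the remaining even cycles, proving $\sum_i n_i-n\ge k-1$, and closing the cases $k\le 4$ by partition/recolouring arguments in which Proposition~\ref{PROP: p1} is applied once $2n_i-1$ sets are forced to concentrate on a single even cycle --- that case analysis is where the constant $4$ actually comes from.
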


\begin{thm}\label{large}
 $f_{C_t}(n,n)=n$ for $t>\frac{1}{3}n^2+\frac{44}{9}n$.
\end{thm}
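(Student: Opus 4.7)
I argue by contradiction. Suppose $\mathcal{F}=(F_1,\ldots,F_n)\sqsubseteq\mathcal{I}_n(C_t)$ admits no rainbow independent $n$-set, and fix a rainbow independent set $R=\{r_1,\ldots,r_k\}$ of $(\mathcal{F},C_t)$ of maximum size, with $r_s\in F_{i_s}$ and $k\le n-1$. Let $J=[n]\setminus\{i_1,\ldots,i_k\}$ denote the set of uncovered indices. Maximality immediately forces $F_j\subseteq R\cup N_{C_t}(R)$ for every $j\in J$: otherwise some $v\in F_j$ is non-adjacent to all of $R$ and $R\cup\{v\}$ is a larger rainbow independent set.

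Next I extract the easy structural consequences. Since $R$ is independent, it partitions $C_t$ into $k$ arcs of total length $t$ and minimum length $2$. Each $F_j$ ($j\in J$) is independent and contained in the $3$-vertex blocks $B_s=\{r_s-1,r_s,r_s+1\}$; a case check gives $|F_j\cap B_s|\le 2$, with equality only if $F_j\cap B_s=\{r_s-1,r_s+1\}$. Writing $p$ for the number of length-$2$ arcs of $R$ (each of which merges two consecutive blocks into a $5$-vertex path and so drops the total block capacity by $1$), I obtain $n=|F_j|\le 2k-p$ and, by summing arc lengths, $t\ge 2p+3(k-p)=3k-p$. These already force $k\ge n/2$ and $p\le 2k-n$, but give no useful upper bound on $t$.

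The decisive step is an augmenting / exchange argument. For any $j\in J$ and any $v\in F_j$, maximality forbids replacing some $r_s\in R$ by an $r'_s\in F_{i_s}$ that is simultaneously non-adjacent to $R\setminus\{r_s\}$ and to $v$; hence every element of $F_{i_s}$ lies in $N[R\setminus\{r_s\}]\cup\{v-1,v,v+1\}$. Running this over the $n-k$ uncovered colours and over all $v\in F_j$, I plan to show that each $F_{i_s}$ must be "locked" into the $\le 3k$ vertices of $R\cup N(R)$ together with a small constant amount of slack around the long arcs. Double-counting the incidences between $\bigcup_s F_{i_s}$ and the arcs of length $\ge 3$ — whose total length is at least $t-2p\ge t-2(2k-n)$ — should then yield an inequality of the form $t\le\tfrac{1}{3}n^2+O(n)$, contradicting the hypothesis.

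The main obstacle is calibrating the exchange step to produce exactly the coefficient $\tfrac{1}{3}$. A one-level swap gives only the weaker bound $t=O(n^2)$ with a larger constant, so the plan requires chaining swaps across several colours in $R$ and exploiting the fact that each of the $n-k$ sets $F_j$ simultaneously packs $n$ independent vertices into the $\le 3k$ positions of $R\cup N(R)$. A further technical hurdle is the cyclic geometry of $C_t$: short arcs can combine around the cycle to obstruct exchanges that would succeed on a path, and the linear term $\tfrac{44}{9}n$ should come from a careful treatment of these small-arc configurations together with the "half" blocks in which $F_j$ meets $B_s$ in only one vertex.
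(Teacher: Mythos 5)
Your proposal is not a proof: the decisive quantitative step is explicitly left as a plan (``I plan to show\dots'', ``should then yield\dots''), and you yourself note that the exchange argument as calibrated gives only $t=O(n^2)$ with a worse constant. Moreover, the one exchange step you do state is not quite correct as written: from the maximality of $R$ you may only conclude that every element of $F_{i_s}$ lies in $N[R\setminus\{r_s\}]\cup\{v-1,v,v+1\}$ for those pairs $(s,v)$ in which $v$ is non-adjacent to every vertex of $R\setminus\{r_s\}$ (and distinct from them); for a general $v\in F_j$, which may sit in another block or touch two vertices of $R$, the swap does not produce a larger rainbow set, so the ``locking'' conclusion fails in the generality you use it.

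The deeper missing ingredient is a \emph{lower} bound on how densely $\mathcal F$ must cover the part of the cycle far from $R$; without it, confining colours to the $O(n)$ vertices of $N[R]$ gives no contradiction with $t$ being large. The paper gets this from the greedy algorithm on paths: in a minimal counterexample every vertex of $C_t$ has list number at least $1$ (Corollary~\ref{no0}~(B1)) and no two consecutive vertices both have list number exactly $1$ (Corollary~\ref{no0}~(B3)), so the far region $F=C_t-N[J\cup R]$ satisfies $\sum_{i\in F}c_{\mathcal F}(i)\ge\frac32(t-5n)-O(n)$. The matching upper bound comes not from locking the colours used on all of $R$, but from a different choice of $R$: one takes $J\in\mathcal F$, a rainbow $(n-1)$-set $R$ of $\mathcal F\setminus\{J\}$ minimizing $|R'|$ where $R'=R\cap N[J]$, shows $|R'|\ge\lceil n/2\rceil$, and shows by a single swap (legitimate here because the replacing vertex lies outside $N[J\cup R]$) that the $|R'|+1\ge n/2+1$ colours in $C_{R'}\cup\{J\}$ are entirely contained in $N[J\cup R]$, a set of size at most $5n$. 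Double counting then gives $\sum_{i\in F}c_{\mathcal F}(i)\le n^2-n(\frac n2+1)=\frac12n^2-n$, and comparing the two bounds yields $t\le\frac13n^2+\frac{44}{9}n$; the coefficient $\frac13$ is exactly the ratio of $\frac12 n^2$ to the density $\frac32$, which is why no amount of chaining swaps inside $N[R]$ alone will recover it. To repair your argument you would need both the (B1)/(B3)-type covering facts and a correct version of the confinement claim for at least $\frac n2$ colours.
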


Given integers $a,b$ with $a\le b$, let $[a,b]=\{a,a+1,\ldots,b-1,b\}$, and write $[b]$ for $[1,b]$ for simplicity.
Let $C_t$ be a cycle with vertex set $[a,a+t-1]$ and edge set $\{a(a+1),\ldots,(a+t-2)(a+t-1),(a+t-1)a\}$. For $t\ge 2n+1$ and $2\le k\le t-2$, an  ordered set $I=(a_1,a_2,\ldots,a_n)$ is called  a {\it $k$-jump independent set} of $C_t$ if $a_{i+1}-a_i=k\pmod{t}$ for any $1\le i\le n-1$.
We call $a_1$ the {\it start} of $I$, denoted by $s(I)$, and $a_n$ the {\it end} of $I$.
Let $\mathcal{I}^k_n(G)$ be the family of all $k$-jump independent $n$-sets in $G$. We prove that  Conjecture~\ref{conj} is true if we restrict  $\mathcal{F}\sqsubseteq \mathcal{I}^2_n(C_t)$.

\begin{thm}\label{+2}
Given integers $t,n$ with $t\ge 2n+1$, if $\mathcal{F}\subseteq \mathcal{I}^2_n(C_t)$ with $|\mathcal{F}|=n$, then $\mathcal{F}$ has a rainbow independent $n$-set.
\end{thm}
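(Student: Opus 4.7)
My plan is to reduce the statement to a transversal question about cyclic intervals, and then attack the reduced problem via a direct construction that exploits the $2$-jump structure.

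\emph{Reduction.} When $t$ is odd, $\gcd(2,t)=1$, so $\phi\colon x \mapsto 2^{-1}x \pmod t$ is a bijection of $V(C_t)$. It carries each $I_j$ to a \emph{cyclic interval} $J_j := [u_j, u_j + n - 1]$ of $n$ consecutive residues (with $u_j := \phi(s(I_j))$), and carries the adjacency $\{x, x+1\}$ of $C_t$ to the relation $\{y, y + (t+1)/2\}$. In these new coordinates, a rainbow independent $n$-set of $(\mathcal{F}, C_t)$ corresponds to a transversal $\{y_j \in J_j\}_{j \in [n]}$ of pairwise distinct residues such that no two are at cyclic distance $(t-1)/2$ in $\mathbb{Z}/t\mathbb{Z}$ (``antipodal''). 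When $t$ is even, every $I_j$ lies in one part of the bipartition of $C_t$, and all $C_t$-adjacencies cross the bipartition; I would split $\mathcal{F}$ by the parity of $s(I_j)$, use Hall's theorem to get an SDR within each part (since $|I_j|=n$, Hall is easy), and then handle the bounded number of potential cross-parity adjacencies by using the $n$-fold flexibility of each $I_j$ to locally adjust the SDR.

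\emph{Core construction.} For the reduced odd-$t$ problem, cyclically sort so that $0 = u_1 \le u_2 \le \cdots \le u_n < t$, and take as a first candidate the \emph{diagonal transversal} $y_j := u_j + (j-1) \pmod t$. Since $j-1 \in [0, n-1]$ we have $y_j \in J_j$; and because the $u_j$'s are sorted, $y_{j+1} - y_j = (u_{j+1} - u_j) + 1 \ge 1$, so the $y_j$'s are strictly increasing before any wrap-around and thus generically pairwise distinct. When duplicates or antipodal pairs appear (coming from wrap-around or from spacings $u_{j'}-u_j \in \{(t-1)/2, (t+1)/2\}$), use the entire window $J_j$ of $n$ choices to locally perturb the offset of an offending index; coordinate the perturbations by Hall-matching $[n]$ into a carefully chosen target independent $n$-set $S \subseteq \mathbb{Z}/t\mathbb{Z}$ via $j \mapsto y_j \in J_j \cap S$.

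\emph{Main obstacle.} The hard part will be coordinating the perturbations globally. A direct appeal to Haxell's independent-transversal theorem is too weak: in the natural conflict graph each vertex $(j, y)$ can clash with up to $3$ values per other index, giving $\Delta \le 3(n-1)$ against class sizes of only $n$, so the hypothesis $|V_j| \ge 2\Delta$ fails once $n \ge 2$. The proof must therefore rely on the specific interval structure: each $J_{j'}$ contributes only a contiguous block of forbidden values (via its two antipodes) inside $J_j$, so the forbidden set in any $J_j$ is a small union of arcs rather than a generic scattered set. I expect the cleanest resolution either to (i) use a counting/averaging argument to produce a single non-uniform offset for the diagonal transversal that simultaneously avoids all conflicts, or (ii) to construct the target independent set $S$ explicitly from the sorted starts $u_1 \le \cdots \le u_n$ and verify Hall's condition for the induced bipartite graph $[n] \to S$.
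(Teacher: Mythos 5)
Your reduction via multiplication by $2^{-1}$ (for odd $t$) is correct and pleasant, but it only re-coordinatizes the problem: afterwards you must still produce, for $n$ cyclic intervals of length $n$ in $\mathbb{Z}_t$, a system of distinct representatives avoiding the forbidden differences $\pm(t+1)/2$, and this is exactly where the proposal stops being a proof. The diagonal transversal $y_j=u_j+(j-1)$ together with ``locally perturb and coordinate the perturbations by Hall-matching into a carefully chosen target set $S$'' is never carried out: $S$ is not defined, Hall's condition is not verified, and you concede yourself that the natural black box (Haxell's independent transversal theorem) is quantitatively too weak and that you merely \emph{expect} one of two strategies (averaging over offsets, or an explicit $S$ built from the sorted starts) to succeed. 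The difficulty is real, not bookkeeping: when many starts $u_j$ coincide or cluster, the diagonal values collide, and the antipodal arcs contributed by the other intervals can occupy a constant fraction of a window $J_j$, so any averaging or Hall-type argument needs genuine estimates that are absent. The even-$t$ case is likewise hand-waved: after splitting by parity, every chosen representative has two neighbours in the opposite parity class, so up to order $n$ cross-parity conflicts can occur simultaneously, and ``using the $n$-fold flexibility to locally adjust the SDR'' may force cascading changes in both classes; no mechanism for resolving this is given. So the core combinatorial content of the theorem is deferred rather than proved.

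For comparison, the paper's argument does not pass to intervals at all. It takes a maximal rainbow independent set $A$ whose gap sequence $D_A$ is lexicographically minimal, assumes $|A|=n-1$, shows that the single unused colour class $B_n$ must satisfy $B_n\subseteq A\cup(A+1)\cup\{a_1-1\}$ with $0\in B_n$ and $|B_n\cap\{a_i,a_i+1\}|=1$ for all $i$, and then runs a case analysis on the start $s(B_n)$, playing the $2$-jump structure and the extremal choice of $A$ against Corollary~\ref{no0}~(B1) to reach a contradiction. If you wish to complete your route, you would need an analogous extremal choice or a fully explicit construction for the interval-transversal problem, with all conflict cases verified; as written, the proposal is a plan with the decisive step missing.
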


\noindent{\bf Remark:} Any independent $n$-set of $C_{2n+1}$  must be a 2-jump set. So the theorem can be viewed as a generalization of Theorem~\ref{THM: Lu} in some sense.

We give a bit more definitions and notation. For a collection $\mathcal{F}$ of sets and a given set $A$, denote $\mathcal{F}-A=\{I-A : I\in \mathcal{F}\}$ and $\mathcal{F}\cap A=\{I\cap A : I\in\mathcal{F}\}$. For a set $B$ of numbers and a given number $i$, let $B+i=\{b+i : b\in B\}$.
For a graph $G$ and a collection $\mathcal{F}$ of independent sets in $G$, let $v\in V(G)$, define $C_{\mathcal{F}}(v)=\{I : v\in I\in\mathcal{F}\}$ to be the {\it list} of $v$ and $c_{\mathcal{F}}(v)=|C_{\mathcal{F}}(v)|$ the {\it list number} of $v$. For a rainbow independent set $R$ of $\mathcal{F}$ and $v\in R$, let $C_{R}(v)$ be the color (i.e. the independent set in the list $C_{\mathcal{F}}(v)$) assigned to $v$ and let $C_{R}=\cup_{v\in R}\{C_{R}(v)\}$.

The rest of the paper is arranged as follows. We  give a greedy algorithm to find a rainbow independent set for a given graph $G$ and $\mathcal{F}\sqsubseteq \mathcal{I}(G)$ and some preliminaries in Section 2. We prove Theorems~\ref{THM: Con1} and \ref{THM: EQ} in Section 3. In Sections 4 and 5, we give the proofs of Theorem~\ref{large} and \ref{+2}. We also give some discussions in the last section.

\section{A greedy algorithm and some preliminaries}

First, we give a greedy algorithm (GRIS) to find a rainbow independent set in a given graph $G$ and $\mathcal{F}\sqsubseteq \mathcal{I}(G)$.
\begin{algorithm}\label{ALG: GR}
\small
\SetAlgoLined
\KwIn{A graph $G$ with ordered vertex set $A=\{a_1,a_2,\ldots,a_t\}$, and a collection of independent set $\mathcal{F}=(I_1,...,I_k)\sqsubseteq \mathcal{I}(G)$}
\KwOut{A rainbow independent set $R$ of $\mathcal{F}$}

 Set $R=C=\emptyset$ and $j=0$.

 Reset $j:=j+1$. If $C_{\mathcal{F}}(a_j)\setminus C=\emptyset$ or $C_{\mathcal{F}}(a_j)\setminus C\neq\emptyset$ but $R\cup\{a_j\}$ is not an independent set, go to step 3; otherwise, reset $R:=R\cup\{a_j\}$  and choose $I_i\in C_{\mathcal{F}}(a_j)\backslash C$ with the minimal index $i$ and reset $C:= C\cup\{I_i\}$.

If $j<t$, return to step 2; else if $j=t$, then stop and output $R=\mbox{GRIS}(G, \mathcal{F})$ and $C=C(G,\mathcal{F})$.
We call $C(G,\mathcal{F})$ a {\it greedy color set}.

\caption{GREEDY-RAINBOW-INDEPENDENT-SET (GRIS($G, \mathcal{F})$)}
\end{algorithm}

It is easy to check that the output $R=\mbox{GRIS}(G, \mathcal{F})$  is a rainbow independent set of $(\mathcal{F}, G)$. The following is a simple fact when we apply GRIS to a path.

\begin{lem}\label{path}
Let $P_t$ be a path with vertex set $[t]$ and edge set $\{12, 23, \ldots, (t-1)t\}$ and  $\mathcal{F}\sqsubseteq \mathcal{I}_{(n-1)^+}(P_t)$ with $|\mathcal{F}|\ge n-1$. Suppose $t\ge 2n-1$.  Let $R=\mbox{GRIS}(P_t, \mathcal{F})$ and $C=C(P_t, \mathcal{F})$. If $|R|<n$, then $|R|=n-1$ and, for any $I\in\mathcal{F}\backslash C$, we have $|I|=n-1$ and $|I\cap\{a,a+1\}|=1$ for any $a\in R$.
\end{lem}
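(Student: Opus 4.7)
The plan is to take an arbitrary uncolored set $I \in \mathcal{F} \setminus C$ (when such an $I$ exists) and examine, vertex by vertex, why GRIS failed to select a representative of $I$. The key observation is that since $I$ is never added to $C$, we have $I \in C_\mathcal{F}(v) \setminus C$ at every time step, for every $v \in I$. So when GRIS processes $v$, the rejection reason ``$C_\mathcal{F}(v) \setminus C = \emptyset$'' is impossible, leaving only the reason ``$R \cup \{v\}$ is not independent''. Because GRIS visits vertices in the natural order $1, 2, \ldots, t$ of the path $P_t$, the only neighbour of $v$ that could already have entered $R$ is $v - 1$. Hence for every $v \in I$ either $v \in R$ or $v - 1 \in R$, giving the inclusion $I \subseteq R \cup (R + 1)$.

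Next I would translate this inclusion into a counting bound. Since $R$ is independent in $P_t$, its elements are pairwise at distance at least $2$, so the two-element ``windows'' $\{a, a+1\}$ for $a \in R$ are pairwise disjoint subsets of $[t+1]$. Since $I$ is itself independent, it contains at most one vertex from each such window, yielding $|I| \leq |R|$. Combined with $|I| \geq n - 1$ (from $I \in \mathcal{I}_{(n-1)^+}(P_t)$) and the hypothesis $|R| < n$, this forces $|R| = n - 1 = |I|$, and equality in the window count forces $|I \cap \{a, a+1\}| = 1$ for every $a \in R$, which is exactly the desired conclusion.

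It remains to rule out $|R| \leq n-2$ when $\mathcal{F} \setminus C$ happens to be empty. But $|C| = |R|$ and $|\mathcal{F}| \geq n - 1$ would then contradict $\mathcal{F} \subseteq C$; whenever $\mathcal{F} \setminus C = \emptyset$ we automatically get $|R| = |C| = |\mathcal{F}| \geq n - 1$, combining with $|R| < n$ to give $|R| = n - 1$ while the universal clause on $I$ is vacuous. There is essentially no obstacle in this argument: the lemma reduces to tracking the two rejection branches of GRIS and using the left-to-right sweep on a path to convert ``$R \cup \{v\}$ is not independent'' into the clean statement ``$v - 1 \in R$''. The hypothesis $t \geq 2n - 1$ is not exploited in the argument above; it merely guarantees that $\mathcal{I}_{(n-1)^+}(P_t)$ is nonempty so that the setup is non-vacuous.
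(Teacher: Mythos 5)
Your proof is correct and follows essentially the same route as the paper's: show via the left-to-right GRIS sweep that any uncolored $I$ satisfies $I\subseteq R\cup(R+1)$, then use independence of $I$ on the disjoint windows $\{a,a+1\}$, $a\in R$, to force $n-1\le |I|\le |R|<n$ and equality throughout. The only (harmless) differences are that you argue ``$v\in R$ or $v-1\in R$'' directly from the algorithm's rejection branches rather than first deducing $I\subseteq (R-1)\cup R\cup(R+1)$ and eliminating $R-1$, and that you explicitly dispose of the vacuous case $\mathcal{F}\setminus C=\emptyset$, which the paper leaves implicit.
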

\begin{proof}
Suppose $|R|=|C|=k<n$. Pick $I\in\mathcal{F}\setminus C$. Then, for any $i\in I$, $R\cup\{i\}$ can not be a larger independent set  of  $P_t$. Thus $i\in R$ or $i$ is a neighbor of some vertex in $R$, i.e. $i\in(R-1)\cup R\cup( R+1)$.
If $i\notin R\cup(R+1)$, then $i\in R-1$, i.e., $i+1\in R$. But at step $i$ of the algorithm GRIS, $i$ will be added to $R$ since $i-1\notin R$. This is a contradiction to $i+1\in R$. Hence $i\in R\cup(R+1)$.
Since $I$ is an independent set,  $|I\cap\{a,a+1\}|\le 1$ for any $a\in R$. So $n-1\le |I|\le|R|=k<n$. Thus we have $|I|=|R|=k=n-1$ and  $|I\cap\{a,a+1\}|=1$ for any $a\in R$.
\end{proof}

In Lemma~\ref{path}, if we choose $\mathcal{F}\in\mathcal{I}_n(P_t)$ with $|\mathcal{F}|=n$ then $|R|$ must be $n$. So we have the following corollary.
\begin{cor}\label{COR: f_P(n,n)}
Suppose $t\ge 2n-1$. Then $f_{P_t}(n,n)=n$.
\end{cor}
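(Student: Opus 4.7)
The plan is to combine the generic lower bound \eqref{EA: e1} with the structural consequence recorded in Lemma~\ref{path}. The bound $f_{P_t}(n,n)\ge n$ is immediate from \eqref{EA: e1}, since any rainbow independent $n$-set needs at least $n$ lists. So the whole task is to establish the matching upper bound $f_{P_t}(n,n)\le n$, i.e.\ to show that every $\mathcal{F}\sqsubseteq\mathcal{I}_n(P_t)$ with $|\mathcal{F}|=n$ admits a rainbow independent $n$-set.

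To do this I would simply run the algorithm GRIS on $(P_t,\mathcal{F})$ with the natural left-to-right ordering $1,2,\dots,t$ of the vertices, and let $R=\mathrm{GRIS}(P_t,\mathcal{F})$ and $C=C(P_t,\mathcal{F})$. The output $R$ is automatically a rainbow independent set of $(\mathcal{F},P_t)$, so it suffices to argue $|R|\ge n$. Assume for contradiction that $|R|<n$. Since $\mathcal{F}\sqsubseteq\mathcal{I}_n(P_t)\subseteq\mathcal{I}_{(n-1)^+}(P_t)$, $|\mathcal{F}|=n\ge n-1$, and $t\ge 2n-1$, the hypotheses of Lemma~\ref{path} are satisfied, and the lemma gives $|R|=|C|=n-1$ together with the conclusion $|I|=n-1$ for every $I\in\mathcal{F}\setminus C$.

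Now $|C|=n-1<n=|\mathcal{F}|$, so $\mathcal{F}\setminus C$ is nonempty; pick any $I$ in it. On the one hand $I\in\mathcal{F}\sqsubseteq\mathcal{I}_n(P_t)$ forces $|I|=n$, and on the other hand Lemma~\ref{path} forces $|I|=n-1$, a contradiction. Hence $|R|\ge n$, and since $R$ is rainbow we actually have $|R|=n$, producing the desired rainbow independent $n$-set and completing the proof.

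There is really no hard step here: the whole argument is a one-line deduction from Lemma~\ref{path} once one observes that, under the uniform size-$n$ hypothesis, the lemma's conclusion ``every uncovered set has size $n-1$'' cannot hold. The only point to be mildly careful about is confirming that $\mathcal{F}\setminus C$ is nonempty, which is automatic because $|\mathcal{F}|=n$ strictly exceeds the presumed $|C|=n-1$.
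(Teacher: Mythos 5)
Your proof is correct and is essentially the paper's own argument: the paper derives the corollary in one line by applying Lemma~\ref{path} to $\mathcal{F}\sqsubseteq\mathcal{I}_n(P_t)$ with $|\mathcal{F}|=n$ and noting that the conclusion $|I|=n-1$ for some $I\in\mathcal{F}\setminus C$ is incompatible with all members having size $n$, forcing $|R|=n$. Your write-up just makes explicit the same deduction (plus the trivial lower bound from~(\ref{EA: e1})), so there is nothing to add.
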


\noindent{\bf Remark:} Note that Aharoni et al. have proved that for the family of  chordal graphs $\mathcal{T}$ and $m\le n$, $f_{\mathcal{T}}(n,m)=m$ (Theorem 3.20 in~\cite{ABKK19}). So, in fact,  we have the following result.
\begin{cor}\label{COR: f_P(n,m)}
Suppose $t\ge 2n-1$ and $F_t$ is a forest of order $t$. Then $f_{F_t}(n,m)=m$ for $m\le n$.
\end{cor}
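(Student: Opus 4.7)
The plan is to deduce Corollary~\ref{COR: f_P(n,m)} directly from the chordal-graph result of Aharoni et al.\ recalled in the remark immediately above. Every forest is trivially chordal, since it contains no cycles at all and in particular no induced cycle of length $\ge 4$. Thus $F_t$ belongs to the family $\mathcal{T}$ of chordal graphs, and Theorem 3.20 of~\cite{ABKK19} applied to $F_t$ gives $f_{F_t}(n,m)\le m$ for all $m\le n$, provided that $\mathcal{I}_n(F_t)$ is nonempty.

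For the matching lower bound, I would invoke inequality~(\ref{EA: e1}), which yields $f_{F_t}(n,m)\ge m$. Combining the two inequalities produces the claimed equality $f_{F_t}(n,m)=m$.

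What remains is to verify that the hypothesis $t\ge 2n-1$ actually guarantees $\mathcal{I}_n(F_t)\neq\emptyset$, so that $f_{F_t}(n,m)$ is well-defined. Since every forest is bipartite, taking the larger color class in a proper $2$-coloring of each tree component yields an independent set of size at least $\lceil t/2\rceil \ge \lceil (2n-1)/2\rceil = n$. Hence an independent $n$-set exists and the preceding paragraph applies. There is no real obstacle in this argument: the corollary is essentially a specialization of the chordal-graph theorem to forests, with the order condition $t\ge 2n-1$ present solely to ensure that the quantity $f_{F_t}(n,m)$ makes sense.
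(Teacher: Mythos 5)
Your proposal is correct and matches the paper's own reasoning: the corollary is stated as an immediate consequence of Theorem 3.20 of~\cite{ABKK19} for chordal graphs (forests being chordal), with the lower bound coming from~(\ref{EA: e1}). Your additional check that $t\ge 2n-1$ guarantees $\mathcal{I}_n(F_t)\neq\emptyset$ via the larger colour class of a $2$-colouring is a sensible clarification of a point the paper leaves implicit.
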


\begin{cor}\label{no0}
Let $C_t$ be a cycle with vertex set $[t]$ and edge set $\{12, 23, \ldots, (t-1)t, t1\}$. Suppose $t\ge 2n$. Then we have

(A)  $f_{C_t}(n,n-1)=n-1$.

(B) Let $m$ be the maximum integer such that  $f_{C_t}(n,m)=n$. Then $m\ge n-1$. Suppose $\mathcal{F}\sqsubseteq \mathcal{I}_{n}(C_t)$ with $|\mathcal{F}|=n$. If $m=n-1$ then the following hold.

(B1) $c_{\mathcal{F}}(i)>0$ for any $i\in [t]$.

(B2) If $C_{\mathcal{F}}(i)=I$ for some $i\in[t]$, then $I\in C_{\mathcal{F}}(i\pm 2)$.

(B3) If $c_{\mathcal{F}}(i)=1$ for some $i\in[t]$ and $t\ge 2n+1$, then $c_{\mathcal{F}}(i\pm 1)>1$.

\end{cor}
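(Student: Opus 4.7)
For part (A), my plan is to reduce the cycle to a path and appeal to Lemma~\ref{path}. Given $\mathcal{F}\sqsubseteq\mathcal{I}_n(C_t)$ with $|\mathcal{F}|=n-1$, pick any vertex $v$ and form $P_{t-1}=C_t-v$, which has $t-1\ge 2n-1$ vertices. Each $I\in\mathcal{F}$ restricts to an independent set of $P_{t-1}$ of size at least $n-1$, so $\{I\setminus\{v\}:I\in\mathcal{F}\}\sqsubseteq\mathcal{I}_{(n-1)^+}(P_{t-1})$, and Lemma~\ref{path} applied with parameter $n$ outputs a greedy rainbow independent set $R$ of size at least $n-1$; the cap $|R|\le|\mathcal{F}|=n-1$ forces $|R|=n-1$, and combined with the trivial bound $f_G(n,m)\ge m$ this yields $f_{C_t}(n,n-1)=n-1$.

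The bound $m\ge n-1$ in (B) is immediate from (A). Henceforth I interpret the hypothesis of (B) together with $m=n-1$ as saying that $\mathcal{F}$ admits no rainbow independent $n$-set (the only nontrivial case), and each of (B1)--(B3) is proved by contradiction: assuming the stated conclusion fails, I will construct such a rainbow independent $n$-set. For (B1), if $c_{\mathcal{F}}(v)=0$ for some $v$, then $\mathcal{F}\sqsubseteq\mathcal{I}_n(C_t-v)=\mathcal{I}_n(P_{t-1})$ with $t-1\ge 2n-1$, and Corollary~\ref{COR: f_P(n,n)} immediately supplies a rainbow independent $n$-set in $P_{t-1}$, hence in $C_t$.

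For (B2), assume $C_{\mathcal{F}}(i)=\{I\}$ and (WLOG) $i+2\notin I$. I would reserve color $I$ for vertex $i$ and search for a rainbow independent $(n-1)$-set of $\mathcal{F}\setminus\{I\}$ inside the path $P_{t-3}=C_t-\{i-1,i,i+1\}$ of length $t-3\ge 2n-3$. Every $J\in\mathcal{F}\setminus\{I\}$ misses $i$, so its restriction to $V(P_{t-3})$ has size at least $n-2$. Applying Lemma~\ref{path} with parameter $n-1$: either the greedy output $R'$ has size $n-1$, in which case $R'\cup\{i\}$ with the added color $I$ is the desired rainbow $n$-set, or $|R'|=n-2$ and the Lemma forces the unique unused color $J_0\in\mathcal{F}\setminus\{I\}$ to contain both $i-1$ and $i+1$ with $J_0\setminus\{i-1,i+1\}$ tightly covered by $R'\cup(R'+1)$ in the path order. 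In this tight case the hypothesis $i+2\notin I$ is leveraged by a swap: reassign the color $I$ to a suitable vertex of $(I\setminus\{i\})\cap V(P_{t-3})$, freeing the color previously used there and making room for an additional vertex drawn from $J_0$, thereby upgrading the rainbow set to size $n$. For (B3), assume WLOG $c_{\mathcal{F}}(i+1)=1$ with unique color $I'$; the extra vertex furnished by $t\ge 2n+1$ (compared with (B2)) provides the slack needed to reserve both $I$ and $I'$ simultaneously and complete the same strategy on the path $C_t-\{i,i+1\}$ of length $t-2\ge 2n-1$.

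The main obstacle is the tight case in (B2), where Lemma~\ref{path} alone falls one vertex short: the bulk of the work is the swap argument that converts the greedy $(n-2)$-set on $P_{t-3}$ into a genuine rainbow $n$-set in $C_t$ by exploiting $i+2\notin I$ to break the rigid covering structure. Once this is in place, (B3) follows from essentially the same template, augmented by the additional room from $t\ge 2n+1$.
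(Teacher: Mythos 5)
Parts (A), (B1), and the bound $m\ge n-1$ are correct and essentially the paper's own reduction to a path via Lemma~\ref{path}, so no issue there. The genuine gap is in (B2): your argument stops exactly where the real work begins. After reserving $I$ for $i$ and running GRIS on $P_{t-3}=C_t-\{i-1,i,i+1\}$ with $\mathcal{F}\setminus\{I\}$, the tight case of Lemma~\ref{path} constrains only the unused colour $J_0$ (namely $J_0\supseteq\{i-1,i+1\}$ and $J_0\cap V(P_{t-3})\subseteq R'\cup(R'+1)$); it says nothing about where $I\setminus\{i\}$ sits relative to $R'$, because $I$ was excluded from the greedy input. Consequently the hypothesis $i+2\notin I$ has no visible leverage in your setup: all it yields (via (B1)) is that some other colour contains $i+2$, hence $i+2\in R'$, which if anything blocks the natural augmentations at $i+1$ and $i+3$. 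The asserted ``swap argument'' --- recolour some vertex of $(I\setminus\{i\})\cap V(P_{t-3})$ with $I$, free its old colour, and absorb a vertex of $J_0$ --- is precisely the missing content: you neither identify the vertex to recolour nor show that the freed colour and $J_0$ can both be placed, and with no control on $I\cap N[R']$ it is unclear a single swap suffices rather than a cascading augmentation. The paper sidesteps all of this by keeping $I$ in the greedy family on $C_t-\{i\}$: since $I$ is the unique member of $\mathcal{F}$ containing $i$, the unique unused colour is forced to be $I\setminus\{i\}$ (the only restricted set of size $n-1$), and since $i+1$ is the first vertex of the path it enters $R$, so the pair condition $|I\cap\{i+1,i+2\}|=1$ together with $i+1\notin I$ gives $i+2\in I$ immediately.

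Your (B3) sketch also does not go through as stated. You cannot ``reserve both $I$ and $I'$ simultaneously'' at $i$ and $i+1$, since these vertices are adjacent; and if you only reserve the colours, you must say at which vertices they will eventually be placed --- nothing in a construction on $C_t-\{i,i+1\}$ controls where the rainbow $(n-2)$-set lands relative to the other vertices of $I$ and $I'$, and the extra slack from $t\ge2n+1$ does not address this. In the paper, (B3) is deduced from (B2): delete the six vertices $[i-2,i+3]$ (here $t\ge2n+1$ gives $t-6\ge2(n-2)-1$), build a rainbow $(n-2)$-set of $\mathcal{F}\setminus\{I,I'\}$ on the remaining path (each such set avoids both $i$ and $i+1$, so meets the window in at most two vertices), and then place $I$ at $i+2$ and $I'$ at $i-1$, which is legitimate precisely because (B2) guarantees $i+2\in I$ and $i-1\in I'$. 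So repairing (B3) essentially requires (B2), and (B2) is where your proposal has the substantive hole.
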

\begin{proof}
Suppose $\mathcal{F}\sqsubseteq\mathcal{I}_n(C_t)$ with $|\mathcal{F}|=k$. Let $P_{t-1}=C_t-\{t\}$ and $\mathcal{F}'=\mathcal{F}-\{t\}$.
Then $\mathcal{F}'\sqsubseteq\mathcal{I}_{(n-1)^+}(P_{t-1})$. Note that $t-1\ge2n-1$. We can apply the algorithm GRIS to $P_{t-1}$ and $\mathcal{F}'$.
Let $R=\mbox{GRIS}(P_{t-1}, \mathcal{F}')$. Then $|R|\le |\mathcal{F}'|=k$.

(A) If $k=n-1$ then $|R|\le k<n$. By Lemma~\ref{path}, $|R|=n-1$. Clearly, $R$ is a rainbow independent set of $(\mathcal{F},C_t)$ too. This implies that $f_{C_t}(n,n-1)=n-1$.

(B) Suppose $k=n$.  Then $|R|\le k=n$. If $|R|<n$, by Lemma~\ref{path}, we have $|R|=n-1$.
So we have $m\ge |R|\ge n-1$. Now we assume $m=n-1$.

(B1) If there is a vertex $i\in[t]$ with $c_{\mathcal{F}}(i)=0$, without loss of generality, assume $i=t$, then $\mathcal{F}'=\mathcal{F}-\{t\}=\mathcal{F}\sqsubseteq\mathcal{I}_n(P_{t-1})$. Note that $t-1\ge 2n-1$. By Corollary~\ref{COR: f_P(n,n)}, we can find a rainbow independent $n$-set $R'$ of $(\mathcal{F}', P_{t-1})$, which is also a rainbow independent $n$-set of $(\mathcal{F}, C_t)$, a contradiction to the maximality of $m$.

(B2) Without loss of generality, assume $C_{\mathcal{F}}(t)=\{I\}$. By the symmetry of $C_t$, it is sufficient to prove $2\in I$.
Recall that $\mathcal{F}'=\mathcal{F}-\{t\}\sqsubseteq\mathcal{I}_{(n-1)^+}(P_{t-1})$. Let $C=C(P_{t-1}, \mathcal{F}')$. Note that $m=|R|=|\mbox{GRIS}(P_{t-1}, \mathcal{F}')|=|C|=n-1$ and $|\mathcal{F}'|=n$. We have $|\mathcal{F}'\setminus C|=1$. Let $\mathcal{F}'\setminus C=\{I'\}$. By Lemma~\ref{path}, we have $|I'|=n-1$ and $|I'\cap\{a,a+1\}|=1$ for every $a\in R$.
Since $I$ is the only independent set containing $t$ in $\mathcal{F}$, we have $I'=I\setminus \{t\}$. By (B1), $C_{\mathcal{F}'}(1)=C_{\mathcal{F}}(1)\neq\emptyset$. So by the processing of the  algorithm GRIS, $1$ is the first vertex added to $R$. Hence $|(I\setminus\{t\})\cap\{1,2\}|=1$. Since $1t\in E(C_t)$ and $t\in I$, we have $1\notin I$. This implies that $2\in I$.

(B3) If not, there are two consecutive vertices $i, i+1\in[t]$ with $|C_{\mathcal{F}}(i)|=|C_{\mathcal{F}}(i+1)|=1$. Without loss of generality, assume  $i=t-3$ and $C_{\mathcal{F}}(t-3)=\{I_1\}$ and $C_{\mathcal{F}}(t-2)=\{I_2\}$.
Let $P_{t-6}=C_t-[t-5,t]$ and $\mathcal{F}'=(\mathcal{F}\setminus\{I_1,I_2\})-[t-5,t]$.
Since $I_1$  is the only member of $\mathcal{F}$ containing $t-3$  and $I_2$  is the only one of $\mathcal{F}$ containing $t-2$, we have $I\cap\{t-3, t-2\}=\emptyset$ for any $I\in\mathcal{F}\setminus\{I_1, I_2\}$. So $|I\cap[t-5,t]|\le 2$ for any $I\in\mathcal{F}\setminus\{I_1, I_2\}$.
Thus $\mathcal{F}'\sqsubseteq\mathcal{I}_{(n-2)^+}(P_{t-6})$.
Note that $t-6\ge 2n+1-6=2(n-2)-1$. By Corollary~\ref{COR: f_P(n,n)}, we have $f_{P_{t-6}}(n-2,n-2)=n-2$.
So we have a rainbow independent $(n-2)$-set $R$ of $(P_{t-6}, \mathcal{F}')$.  This is also a rainbow independent $(n-2)$-set of $(\mathcal{F}\setminus\{I_1,I_2\}, C_t)$.
Recall that $C_{\mathcal{F}}(t-3)=\{I_1\}$ and $C_{\mathcal{F}}(t-2)=\{I_2\}$. By (B2), $I_1\in C_{\mathcal{F}}(t-1)$ and $I_2\in C_{\mathcal{F}}(t-4)$.
Therefore, by adding $t-4$ and $t-1$ to $R$, we get a rainbow independent $n$-set $R\cup\{t-1,t-4\}$ of $( \mathcal{F}, C_t)$, a contradiction to $m=n-1$.

\end{proof}


\section{Proofs of Theorem~\ref{THM: Con1} and \ref{THM: EQ}}
An {\it odd (resp. even) cycle} is a cycle of odd length (resp. even length).
\begin{proof}[Proof of Theorem~\ref{THM: Con1}]
We prove by induction on the number of components  $c(G)$ of $G$.
If $c(G)=1$, then $G$ is a cycle. By Corollaries~\ref{no0} (A), we have $f_G(n,n-1)=n-1$.
Now suppose $c(G)>1$ and the result holds for all 2-regular graphs $H$ with $c(H)<c(G)$.
Let $\mathcal{F}=(I_1, I_2, \ldots, I_{n-1})\sqsubseteq\mathcal{I}_n(G)$.

If $G$ has a component isomorphic to a cycle $C_{2m+1}$ for some $m\in [n-1]$, then $|I_i\cap V(C_{2m+1})|=m$ for every $i\in[n-1]$ because $2n-1\le|V(G)|\le 2n$.
Let $\mathcal{F}_1=(I_1\cap V(C_{2m+1}), \ldots, I_m\cap V(C_{2m+1}))$. Then $\mathcal{F}_1\sqsubseteq\mathcal{I}_m(C_{2m+1})$.
By Theorem~\ref{THM: Lu}, we can find a rainbow independent $m$-set $R_1$ of $(\mathcal{F}_1, C_{2m+1})$.
Let $H=G-V(C_{2m+1})$ and $\mathcal{F}_2=(I_{m+1}\cap V(H), \ldots, I_{n-1}\cap V(H))$. Then $H$ is 2-regular with $|V(H)|=2(n-m)-1$ and $\mathcal{F}_2\sqsubseteq\mathcal{I}_{n-m}(H)$.
By the induction hypothesis, there is a rainbow independent $(n-m-1)$-set $R_2$ of $(\mathcal{F}_2, H)$. So $R_1\cup R_2$ is a rainbow independent $(n-1)$-set of $(\mathcal{F}, G)$, we are done.


Now suppose that every component of $G$ is an even  cycle. Let $G=C_{1}\cup C_{2}\cup\ldots\cup C_{k}$ with $|V(C_i)|=2n_i$ for $i\in[k]$.
Then $|I_i\cap V(C_{j})|=n_j$ for every $i\in[n-1]$ and $j\in[k]$.
Let $H=C_{2}\cup\ldots \cup C_{k}$. Then $\mathcal{F}'=(I_{n_1+1}\cap V(H), \ldots, I_{n-1}\cap V(H))\sqsubseteq\mathcal{I}_{n-n_1}(H)$.
By the induction hypothesis, there is a rainbow independent $(n-n_1-1)$-set $R'$ of $(\mathcal{F}', H)$. So $|R'\cap V(C_{j})|=n_j$ for all $j\in[2,k]$ but exactly one exception, without loss of generality, say $C_{2}$, with $|R'\cap V(C_{2})|=n_2-1$. Let $J=\cup_{v\in R'\cap V(C_{2})} C_{R'}(v)$. Then $|J|=n_2-1$.
Let $\mathcal{F}_1=(I_j : j\in [n_1] \text{ or } I_j\in J)$ and assume $n_1\le n_2$. Then $|\mathcal{F}_1|=n_1+n_2-1\ge 2n_1-1$ and $\mathcal{F}_1\cap V(C_{1})\sqsubseteq\mathcal{I}_{n_1}(C_{1})$. So we have a rainbow independent $n_1$-set $R_1$ of $(\mathcal{F}_1\cap V(C_1), C_{1})$ by Proposition~\ref{PROP: p1}. Let $\mathcal{F}_2=\mathcal{F}_1\setminus C_{R_1}$. Then $|\mathcal{F}_2|=n_2-1$ and $\mathcal{F}_2\cap V(C_{2})\sqsubseteq\mathcal{I}_{n_2}(C_{2})$. By Corollary~\ref{no0} (A),  there is a  rainbow independent $(n_2-1)$-set of $(\mathcal{F}_2\cap V(C_{2}), C_{2})$. Therefore, $R'\setminus(R'\cap V(C_{2}))\cup R_1\cup R_2$ is rainbow independent $(n-1)$-set of $(\mathcal{F}, G)$.  This completes the proof.
\end{proof}

Let $V$ be a finite set and $\mathcal{I}\sqsubseteq 2^V$. For a subset $S\subset V$, define $$h(\mathcal{I},S)=\max\{m : |\{I\in\mathcal{I} : |I\cap S|\ge m\}|\ge m\}.$$

\begin{lem}\label{LEM: 2cycles}
Let $W$ be a finite set and $V\subseteq W$. Suppose $\mathcal{I}\sqsubseteq 2^W$ is a collection of $n^+$-sets with $|\mathcal{I}|=n$ and $h(\mathcal{I},V)=n$. Let $(V_1,V_2)$ be a partition of $V$ and let $h(\mathcal{I},V_i)=m_i$ for $i=1,2$. Then we have $m_1+m_2\geq n$.  Furthermore,

(i) for each $\ell\leq m_1+m_2-n$, there exists a partition $(\mathcal{I}_1,\mathcal{I}_2)$ of $\mathcal{I}$ such that $h(\mathcal{I}_1,V_1)=m_1-\ell$ and $h(\mathcal{I}_2,V_2)=n-m_1+\ell$;

(ii) if $m_1+m_2-n=0$, then $|I\cap V_{3-i}|\leq m_{3-i}$ for every $I\in \mathcal{I}_i$ and $i=1,2$;

(iii) if $m_1+m_2-n>0$ and $\ell\leq m_1+m_2-n-1$, then we can choose $\mathcal{I}_2$ with $|I\cap V_2|\geq n-m_1+\ell+1$ for every $I\in \mathcal{I}_2$.
\end{lem}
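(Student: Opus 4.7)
My plan is to reduce everything to counting over the pairs $(a_i,b_i):=(|I_i\cap V_1|,|I_i\cap V_2|)$ for $i\in[n]$. Since $V_1\cup V_2=V$ and $h(\mathcal{I},V)=n$ forces $|I_i\cap V|\ge n$ for every $i$, each such pair satisfies $a_i+b_i\ge n$; this will be essentially the only tool.

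For the sum bound $m_1+m_2\ge n$, I would argue by contradiction. The definition of $m_1$ gives $|\{i:a_i\ge m_1+1\}|\le m_1$, so at least $n-m_1$ indices satisfy $a_i\le m_1$; symmetrically at least $n-m_2$ satisfy $b_i\le m_2$. If $m_1+m_2<n$ then these two index sets must overlap, producing an index with $a_i+b_i\le m_1+m_2<n$, contradicting $a_i+b_i\ge n$.

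For (i), given $\ell\le m_1+m_2-n$, I set $A=\{i:a_i\ge m_1-\ell\}$ and $B=\{i:b_i\ge n-m_1+\ell\}$. The hypothesis on $\ell$ yields $n-m_1+\ell\le m_2$, hence $|B|\ge m_2\ge n-m_1+\ell$ and $|A|\ge m_1\ge m_1-\ell$; the sum constraint forces $A\cup B=[n]$, since any outside $i$ would give $a_i+b_i<(m_1-\ell)+(n-m_1+\ell)=n$. Letting $A_0=[n]\setminus B$, I note $A_0\subseteq A$ (from $b_i<n-m_1+\ell\Rightarrow a_i\ge m_1-\ell$) and $|A_0|=n-|B|\le m_1-\ell$. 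Extending $A_0$ inside $A$ to a set $A'$ with $|A'|=m_1-\ell$ and declaring $\mathcal{I}_1=\{I_i:i\in A'\}$, $\mathcal{I}_2=\mathcal{I}\setminus\mathcal{I}_1$, the count $|\mathcal{I}_1|=m_1-\ell$ with all $a_i\ge m_1-\ell$ gives $h(\mathcal{I}_1,V_1)=m_1-\ell$, and $[n]\setminus A'\subseteq B$ yields $h(\mathcal{I}_2,V_2)=n-m_1+\ell$.

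For (iii), I would rerun the construction with the strictly stronger set $B^{+}=\{i:b_i\ge n-m_1+\ell+1\}$; the strict slack $\ell+1\le m_1+m_2-n$ gives $|B^{+}|\ge n-m_1+\ell+1$, the same chain of inclusions goes through, and by construction every $i\in\mathcal{I}_2$ now lies in $B^{+}$, which is the stronger lower bound required. For (ii), where $\ell=0$ is forced, I refine the choice of extras within $A\cap B$ by picking them from indices with $b_i=m_2$: such indices lie automatically in $A\cap B$ (because $b_i=m_2$ combined with $a_i+b_i\ge n$ gives $a_i\ge m_1$), and a count shows there are at least $|B|-m_2$ of them, exactly matching the number of extras needed, so $b_i\le m_2$ holds on all of $A'$. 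A symmetric refinement, trading the roles of $V_1$ and $V_2$, then pins down $a_i\le m_1$ on $\mathcal{I}_2$. The main obstacle I expect is making the two one-sided refinements compatible in a single partition; I anticipate this will rest on a simultaneous cross-count that exploits the tight equality $m_1+m_2=n$ to confine the remaining indices in $A\cap B$ to the diagonal cell $(a_i,b_i)=(m_1,m_2)$.
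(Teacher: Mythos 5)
Your treatment of the preliminary bound $m_1+m_2\ge n$, of (i), and of (iii) is correct and is essentially the paper's own argument: the paper likewise works with the threshold families $\mathcal{I}_1'=\{I:|I\cap V_1|\ge m_1-\ell\}$ and $\mathcal{I}_2'=\{I:|I\cap V_2|\ge n-m_1+\ell\}$ (with the threshold raised to $n-m_1+\ell+1$ for (iii)), observes that they cover $\mathcal{I}$ because $|I\cap V|\ge n$, and extends $\mathcal{I}_1'\setminus\mathcal{I}_2'$ to a set of size $m_1-\ell$; your $A$, $B$, $A_0$, $A'$ are exactly these objects. (Deriving $|I\cap V|\ge n$ from $h(\mathcal{I},V)=n$ rather than from $|I|\ge n$, as you do, is in fact the cleaner reading.)

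The genuine gap is in (ii), and it sits exactly where you flagged it: the ``simultaneous cross-count'' you defer to does not exist. With only $a_i+b_i\ge n$, the tightness $m_1+m_2=n$ does not confine the indices of $A\cap B$ to the diagonal cell $(m_1,m_2)$. For instance, take $n=2$, $V_1=\{x_1,x_2\}$, $V_2=\{y_1,y_2\}$, $I_1=\{x_1,x_2,y_1,y_2\}$, $I_2=\{x_1,y_1\}$: then $h(\mathcal{I},V)=2$, $m_1=m_2=1$, and $I_1$ satisfies $|I_1\cap V_1|>m_1$ and $|I_1\cap V_2|>m_2$ simultaneously, so whichever part receives $I_1$ violates one of the two inequalities in (ii); no choice of ``extras'' can repair this, so your plan cannot be completed from the inequality $a_i+b_i\ge n$ alone. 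The paper's proof of (ii) is instead a one-line complement argument: for $I\in\mathcal{I}_i\subseteq\mathcal{I}_i'$ and $\ell=0$ one has $|I\cap V_i|\ge m_i$, hence $|I\cap V_{3-i}|\le n-m_i=m_{3-i}$ --- which tacitly uses $|I\cap V_1|+|I\cap V_2|\le n$, i.e. the tight case $|I\cap V|=n$. So, relative to the paper, the missing step is this complement identity in the tight case rather than a more careful partition; and your suspicion that something extra is needed is well founded, since under the hypotheses as literally stated (only $|I\cap V|\ge n$) conclusion (ii) cannot be salvaged by any choice of the partition, as the example above shows.
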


\begin{proof}
Assume $\mathcal{I}=(I_1, I_2, \ldots, I_n)$ with $|I_1\cap V_1|\geq|I_2\cap V_1|\geq\ldots\geq|I_n\cap V_1|$. By the definition of $m_1$, $|I_i\cap V_1|\geq m_1$ for each $1\leq i\leq m_1$ and $|I_j\cap V_1|\leq m_1$ for each $m_1+1\leq j\leq n$. So we have  $|I_j\cap V_2|\ge n-m_1$ for each $m_1+1\leq j\leq n$.  By the definition of $m_2$,  we have $m_2\geq n-m_1$, i.e. $m_1+m_2\ge n$.

(i) Let  $\ell\leq m_1+m_2-n$ and $\mathcal{I}_1'=\{I_i: |I_i\cap V_1|\geq m_1-\ell\}$ and $\mathcal{I}_2'=\{I_i: |I_i\cap V_2|\geq n-m_1+\ell\}$. By the definition of $m_1$ and $m_2$, we have $|\mathcal{I}_1'|\geq m_1$ and $|\mathcal{I}_2'|\geq m_2$. Since every $|I_i|\geq n$, we have either $|I_i\cap V_1|\geq m_1-\ell$ or $|I_i\cap V_2|> n-m_1+\ell$. Hence $\mathcal{I}_1'\cup\mathcal{I}_2'=\mathcal{I}$ and so $|\mathcal{I}_1'\setminus\mathcal{I}_2'|\leq n-m_2\leq m_1-\ell$. Therefore, we can choose a subset $\mathcal{I}_1$ of $\mathcal{I}$ with $\mathcal{I}_1'\setminus\mathcal{I}_2'\subseteq\mathcal{I}_1\subseteq\mathcal{I}_1'$ and $|\mathcal{I}_1|=m_1-\ell$. Let $\mathcal{I}_2=\mathcal{I}\setminus\mathcal{I}_1$. Clearly, $h(\mathcal{I}_1,V_1)=m_1-\ell$ and $h(\mathcal{I}_2,V_2)=n-m_1+\ell$.

(ii) Clearly, if $\ell=m_1+m_2-n=0$ then $|I\cap V_{3-i}|\le n-m_i=m_{3-i}$ for any $I\in\mathcal{I}_i$ and $i=1,2$.

(iii) If $m_1+m_2-n>0$ and $\ell\le m_1+m_2-n-1$, then we can reset $\mathcal{I}_2'=\{I_i: |I_i\cap V_2|\geq n-m_1+\ell+1\}$. Note that $n-m_1+\ell+1\leq m_2$. All discussions in (i) keep true. Thus, we have the desired partition $(\mathcal{I}_1, \mathcal{I}_2)$ of $\mathcal{I}$ with $|I\cap V_2|\geq n-m_1+\ell+1$ for every $I\in \mathcal{I}_2$.
\end{proof}

Now we give the proof of Theorem~\ref{THM: EQ}.
\begin{proof}[Proof of Theorem~\ref{THM: EQ}:]
Suppose to the contrary that there is a graph $G\in\mathcal{D}(2)$ with $c_e(G)\le 4$ and an $\mathcal{F}=(I_1, I_2, \ldots, I_{n-1})\sqsubseteq\mathcal{I}_{n}(G)$ such that $G$ contains no rainbow independent $(n-1)$-set of $(\mathcal{F}, G)$.
Since $G\in\mathcal{D}(2)$, each component of $G$ is a path or a cycle. Assume that $G$ is a minimum counterexample.
We claim that $G$ contains no component isomorphic to a path or an odd cycle.
\begin{claim}\label{CLM: c2}
$G$ contains no component $H$ with $f_H(h, h)=h$, where $h=h(\mathcal{F}, V(H))$.  In particular, $G$ contains no component isomorphic to a path or an odd cycle.
\end{claim}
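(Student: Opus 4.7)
The plan is to split $\mathcal{F}$ into two sub-collections according to how much each set intersects $V(H)$, handle $H$ using the hypothesis $f_H(h,h)=h$, and handle $G':=G-V(H)$ via the minimality of $G$. Write $n':=n-h$. Because $|\mathcal{F}|=n-1$ and every $I\in\mathcal{F}$ has uniform size $n$, one can partition $\mathcal{F}=\mathcal{F}_1\cup\mathcal{F}_2$ with $|\mathcal{F}_1|=h$ and $|\mathcal{F}_2|=n-1-h=n'-1$ so that $|I\cap V(H)|\ge h$ for every $I\in\mathcal{F}_1$ and $|I\cap V(G')|\ge n'$ for every $I\in\mathcal{F}_2$; then a rainbow independent $h$-set in $H$ (from $f_H(h,h)=h$) glues with a rainbow independent $(n'-1)$-set in $G'$ (from the minimality of $G$) to form a rainbow independent $(n-1)$-set of $(\mathcal{F},G)$, contradicting the choice of $G$.

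In detail, let $\mathcal{F}_1$ be $h$ members of $\mathcal{F}$ with the largest values of $|I\cap V(H)|$ (ties broken arbitrarily). By the definition of $h$, each such $I$ satisfies $|I\cap V(H)|\ge h$. If some $I\in\mathcal{F}_2:=\mathcal{F}\setminus\mathcal{F}_1$ had $|I\cap V(H)|\ge h+1$, then all $h$ sets in $\mathcal{F}_1$ would too, yielding $h+1$ sets with intersection $\ge h+1$ and violating the maximality of $h$. Hence every $I\in\mathcal{F}_2$ has $|I\cap V(G')|\ge n-h=n'$. For each $I\in\mathcal{F}_1$ pick an arbitrary $h$-subset $J_I\subseteq I\cap V(H)$; the resulting collection lies in $\mathcal{I}_h(H)$, has size $h$, and by $f_H(h,h)=h$ admits a rainbow independent $h$-set $R_H$. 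Analogously, for each $I\in\mathcal{F}_2$ pick an $n'$-subset $J'_I\subseteq I\cap V(G')$ to obtain $\mathcal{F}'_2\sqsubseteq\mathcal{I}_{n'}(G')$ of size $n'-1$. Note $G'\in\mathcal{D}(2)$, $c(G')<c(G)$, $c_e(G')\le c_e(G)\le 4$, and $\mathcal{I}_{n'}(G')\neq\emptyset$ (witnessed by any $J'_I$), so the minimality of $G$ yields $f_{G'}(n',n'-1)=n'-1$, giving a rainbow independent $(n'-1)$-set $R_{G'}$ of $(\mathcal{F}'_2,G')$. Since $V(H)\cap V(G')=\emptyset$, the set $R_H\cup R_{G'}$ is a rainbow independent $(n-1)$-set of $(\mathcal{F},G)$, a contradiction. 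The boundary cases $h\in\{0,n-1\}$ degenerate one side of the split and cause no trouble.

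For the ``in particular'' clause: if $H=P_t$ then $h\le\alpha(P_t)=\lceil t/2\rceil$, so $t\ge 2h-1$ and Corollary~\ref{COR: f_P(n,n)} gives $f_{P_t}(h,h)=h$; if $H=C_{2k+1}$ is an odd cycle then $h\le\alpha(C_{2k+1})=k$, so $2k+1\ge 2h+1$ and the standing hypothesis $f_{C_\ell}(n,n)=n$ for $\ell\ge 2n+1$ gives $f_{C_{2k+1}}(h,h)=h$. Both are excluded as components of $G$. The only real subtlety is the bookkeeping in the split: the uniformity $|I|=n$ together with the tight definition of $h$ is precisely what guarantees that $\mathcal{F}_2$ has the required room in $G'$; everything else is recursion on the minimum counterexample.
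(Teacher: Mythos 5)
Your proof is correct and follows essentially the same route as the paper: split $\mathcal{F}$ into the $h$ sets meeting $V(H)$ most and the rest (using the maximality in the definition of $h$), apply $f_H(h,h)=h$ on $H$ and the minimality of the counterexample on $G-V(H)$, and glue, with the path/odd-cycle cases handled exactly as in the paper via Corollary~\ref{COR: f_P(n,n)} and the standing hypothesis on cycles. Your extra step of pruning each restricted set to an exact $h$- or $n'$-subset is only a cosmetic tightening of the paper's use of $\mathcal{I}_{h^+}$ and $\mathcal{I}_{(n-h)^+}$.
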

\begin{proof}
Suppose to the contrary that there is a component $H$ of $G$ with $f_{H}(h,h)=h$, where $h=f(\mathcal{F}, V(H))$.  Let $G'=G-V(H)$.  Without loss of generality, assume $|I_1\cap V(H)|\geq|I_2\cap V(H)|\geq\ldots\geq|I_{n-1}\cap V(H)|$.  So $|I_i\cap V(H)|\ge h$ for every $1\le i\le h$ and $|I_j\cap V(G')|\le h$ for every $h+1\le j\le n-1$ by the definition of $h$. Let $\mathcal{F}_1=(I_1, \ldots, I_h)$ and $\mathcal{F}_2=(I_{h+1},\ldots, I_{n-1})$. Then $\mathcal{F}_1\sqsubseteq\mathcal{I}_{h^+}(H)$ and $\mathcal{F}_2\sqsubseteq\mathcal{I}_{(n-h)^+}(G')$.
Since $f_{H}(h,h)=h$, we can find a rainbow independent $h$-set $R_1$ of  $(\mathcal{F}_1, H)$.
Since $G$ is a minimum counterexample, we can find a rainbow independent $(n-h-1)$-set $R_2$ of $(\mathcal{F}_2, G')$.
Therefore, $R_1\cup R_2$ is a rainbow independent $(n-1)$-set of $(\mathcal{F}, G)$, a contradiction.

If $H$ is isomorphic to a path or an odd cycle. Then $|V(H)|\ge 2h+1$  if $H$ is an odd cycle and $2h$ otherwise. By the assumption $f_{C_s}(h,h)=h$ when $s\ge 2h+1$ and Corollary~\ref{COR: f_P(n,m)}, we always have $f_{H}(h,h)=h$. We are done.

\end{proof}

By Claim~\ref{CLM: c2}, we may assume that $G$ consists of $k$ even cycles, namely $G=\cup_{i=1}^{k} C_i$, where $|V(C_i)|=2n_i, 1\leq i\leq k$.  By Corollary~\ref{no0} (A),  $k\ge 2$. Let $V_i=V(C_i)$.  By Claim~\ref{CLM: c2}, $h(\mathcal{F}, V_i)=n_i$ for $i\in[k]$. Let $\mathcal{F}_i=\{I_j\in \mathcal{F} : |I_j\cap V_i|=n_i\}$, $i\in[k]$. Then $|\mathcal{F}_i|\geq n_i$ for $i\in[k]$. Denote $t= \sum_{i=1}^{k} n_i-n$.

\begin{claim}\label{CLM: t>k-1}
 $t\geq k-1$.
\end{claim}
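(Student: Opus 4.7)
The plan is to argue by contradiction. I assume $t\le k-2$ and will construct a rainbow independent $(n-1)$-set of $(\mathcal{F},G)$, contradicting the choice of $G$ as a counterexample.

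\emph{Step 1: every $I_j$ is saturated on at least two cycles.} Since $|I_j|=n$ and $|I_j\cap V_i|\le n_i$ for each $i$, we have
\[
\sum_{i=1}^{k}\bigl(n_i-|I_j\cap V_i|\bigr)\;=\;\sum_{i=1}^{k}n_i-n\;=\;t\;\le\; k-2,
\]
so $I_j\in\mathcal{F}_i$ for at least $k-t\ge 2$ values of $i$. Writing $d_j=|\{i:I_j\in\mathcal{F}_i\}|$, this gives $d_j\ge k-t$ for every $j$, and summing yields $\sum_{i}|\mathcal{F}_i|=\sum_{j}d_j\ge(n-1)(k-t)\ge 2(n-1)$.

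\emph{Step 2: partition $\mathcal{F}$ into groups saturated on individual cycles via Lemma~\ref{LEM: 2cycles}.} To meet the hypothesis $|\mathcal{I}|=n$ of Lemma~\ref{LEM: 2cycles}, I would first augment $\mathcal{F}$ by appending a maximum independent set $I^{*}$ of $G$ (which exists since $G$ has independence number $\sum n_i=n+t\ge n$), obtaining $\mathcal{F}^{*}=\mathcal{F}\cup\{I^{*}\}$ of size $n$ with $h(\mathcal{F}^{*},V(G))=n$. Then I would iterate Lemma~\ref{LEM: 2cycles} along the partitions $(V(C_1),V(G)\setminus V(C_1))$, $(V(C_2),V(G)\setminus V(C_1\cup C_2))$, and so on, checking at each stage that $h(\cdot,V_i)=n_i$ (inherited from Claim~\ref{CLM: c2}) and that the surplus $m_1+m_2\ge n$ holds thanks to Step 1. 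This produces a partition $\mathcal{F}^{*}=\bigsqcup_{i=1}^{k}\mathcal{F}^{(i)}$ with $\sum_i|\mathcal{F}^{(i)}|=n$ and every member of $\mathcal{F}^{(i)}$ saturated on $V(C_i)$.

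\emph{Step 3: assemble the rainbow set and cancel $I^{*}$.} For each $C_i$, I would use $\mathcal{F}^{(i)}$ to build a rainbow independent $n_i$-set $R_i\subseteq V(C_i)$ via Proposition~\ref{PROP: p1} (together with Corollary~\ref{no0}(A) when $|R_i|<n_i$ is needed), exploiting the fact that every saturated set in $C_{2n_i}$ restricts to one of only two maximum independent sets, which allows sidestepping the worst-case $2n_i-1$ bound of Proposition~\ref{PROP: p1}. The union $R=\bigcup_i R_i$ is a rainbow independent $n$-set of $(\mathcal{F}^{*},G)$; since $|\mathcal{F}|=n-1<n$, exactly one vertex of $R$ is colored by $I^{*}$, and deleting that vertex yields a rainbow independent $(n-1)$-set of $(\mathcal{F},G)$, the desired contradiction.

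\emph{Main obstacle.} The hard part is the tight simultaneous accounting in Steps 2 and 3: the budget $|\mathcal{F}|=n-1$ is already tight, Proposition~\ref{PROP: p1}'s generic requirement of $2n_i-1$ saturated sets per cycle exceeds what is available, and Corollary~\ref{no0}(A) only delivers rainbow sets of size $n_i-1$. Making the sizes $|R_i|$ add up to exactly $n$ requires carefully trading full and partial cycles using the flexibility provided by $d_j\ge 2$ and by the limited structural diversity (two choices) of saturated sets in $C_{2n_i}$. The hypothesis $c_e(G)\le 4$, i.e. $k\le 4$, likely becomes essential here, permitting a short case analysis over $k\in\{2,3,4\}$ to nail down the boundary cases $t=k-2$.
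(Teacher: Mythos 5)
There is a genuine gap. Your Step 1 is exactly the paper's starting point (every $I_j$ is saturated on at least $k-t\ge 2$ cycles, hence $\sum_i|\mathcal{F}_i|\ge 2(n-1)$), but you stop short of the one observation that closes the proof: since $t\le k-2$,
\[
\sum_{i=1}^{k}|\mathcal{F}_i|\;\ge\;2(n-1)\;=\;2\sum_{i=1}^{k}n_i-2t-2\;>\;\sum_{i=1}^{k}(2n_i-2),
\]
so by pigeonhole some cycle, say $C_1$, satisfies $|\mathcal{F}_1|\ge 2n_1-1$. That is precisely the threshold of Proposition~\ref{PROP: p1}, so one gets a rainbow independent $n_1$-set $R_1$ of $(\mathcal{F}_1,C_1)$, and the minimality of the counterexample $G$ then yields a rainbow independent $(n-n_1-1)$-set of $(\mathcal{F}\setminus C_{R_1},G-V_1)$; the union gives the contradiction. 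Your remark that ``Proposition~\ref{PROP: p1}'s generic requirement of $2n_i-1$ saturated sets per cycle exceeds what is available'' shows you missed that the counting under $t\le k-2$ supplies exactly this many sets for at least one cycle, which is the whole point of the claim.

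Your alternative Steps 2--3 do not repair this. After augmenting to $\mathcal{F}^{*}$ of size $n$ and partitioning, you have $\sum_i|\mathcal{F}^{(i)}|=n<\sum_i n_i$, so to end with a rainbow $n$-set you would need $|R_i|=|\mathcal{F}^{(i)}|$ for every $i$. But on an even cycle $C_{2n_i}$ a group of size exactly $n_i$ whose members restrict to both maximum independent sets only guarantees a rainbow independent set of size $n_i-1$ (this is exactly why $f_{C_{2n_i}}(n_i,n_i)=2n_i-1$), so the sizes need not add up; you acknowledge this as the ``main obstacle'' and leave it unresolved, which means the proposal is an incomplete sketch rather than a proof. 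Moreover, Lemma~\ref{LEM: 2cycles} does not by itself deliver groups in which \emph{every} member is saturated on its cycle, and the hypothesis $c_e(G)\le 4$ that you hope to lean on is not needed for this claim at all: the paper's argument works for arbitrary $k$.
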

\begin{proof}
Suppose to the contrary that $t\leq k-2$.
Note that $n=|I|=\sum_{i=1}^k|I\cap V_i|$ for every $I\in\mathcal{F}$. So there are at least two cycles $C_i, i\in [k]$ with $|I\cap V_i|\ge n_i$, i.e., every $I$ lies in at least two of $\{\mathcal{F}_i, i\in[k]\}$ for every $I\in \mathcal{F}$.
Therefore, $$\sum_{i=1}^{k}|\mathcal{F}_i|\geq 2|\mathcal{F}|=2(n-1)=2\sum_{i=1}^{k}n_i-2t-2>\sum_{i=1}^{k}(2n_i-2).$$
 Thus there exists at least one $i\in[k]$, say $i=1$, such that $|\mathcal{F}_1|\geq 2n_1-1$. By Proposition~\ref{PROP: p1}, we can find a rainbow independent $n_1$-set $R_1$ of $(\mathcal{F}_1, C_1)$.  By the minimality of $G$, we can find a rainbow independent $(n-n_1-1)$-set $R_2$ of $(\mathcal{F}\setminus C_{R_1}, G-V_1)$. Hence $R_1\cup R_2$ is a rainbow independent $(n-1)$-set of $(\mathcal{F}, G)$, a contradiction.
\end{proof}

Let $n_k=\min\{n_1, \ldots, n_k\}$. By Corollary~\ref{no0} (A), there is a rainbow independent $(n_k-1)$-set $R_k$ of $(\mathcal{F}_k, C_k)$. Let $\mathcal{A}=\mathcal{F}\setminus C_{R_k}$.
Then $$|\mathcal{A}|=n-1-(n_k-1)=\sum_{i=1}^{k-1}n_i-t$$ and $$h(\mathcal{A}, G-V_k)=\sum_{i=1}^{k-1}n_i-t$$ because $|I\cap V(G-V_k)|\ge n-n_k=\sum_{i=1}^{k-1}n_i-t$ for every $I\in\mathcal{A}$.
If $k=2$, then $G-V_k=C_1$ and so $h(\mathcal{A}, C_1)=n_1-t<n_1$ because $t\geq k-1=1$. By the assumption $f_{C_s}(n,n)=n$ for $s\ge 2n+1$, we have a rainbow independent $(n_1-t)$-set $R_1$ of $(\mathcal{A}, C_1)$.  Hence $R_1\cup R_2$ is a rainbow independent $(n-1)$-set of $(\mathcal{F}, G)$, a contradiction.

Now assume that $k\geq 3$.  Let $\hat{V_i}=V(G-V_k-V_i)$ and $h(\mathcal{A}, V_i)=m_i$ for $i\in [k-1]$. Then $\hat{V_i}\neq \emptyset$. Applying Lemma~\ref{LEM: 2cycles} to $V_i\cup\hat{V_i}$, we have $m_i+h(\mathcal{A}, \hat{V_i})\ge |\mathcal{A}|$.

\begin{claim}\label{2part}
For $i\in[k-1]$, $$m_i+h(\mathcal{A}, \hat{V_i})=\left\{\begin{array}{ll}
                                                         |\mathcal{A}|+1 & \text{if $m_i=n_i$} \\
                                                         |\mathcal{A}| & \text{otherwise.}
                                                       \end{array}
                                                  \right.
$$
\end{claim}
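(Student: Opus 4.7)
The plan is to establish the lower bound $m_i+h(\mathcal{A}, \hat{V}_i)\ge |\mathcal{A}|$ and the upper bound (with the extra slack of $1$ only when $m_i=n_i$) separately. The basic lower bound is immediate from Lemma~\ref{LEM: 2cycles} applied to $\mathcal{A}$ with ground set $V(G-V_k)$ and partition $V_i\cup\hat{V}_i$: every $I\in\mathcal{A}$ satisfies $|I\cap V(G-V_k)|\ge n-n_k=|\mathcal{A}|$, so $h(\mathcal{A},V(G-V_k))=|\mathcal{A}|$ and the lemma gives $m_i+h(\mathcal{A}, \hat{V}_i)\ge|\mathcal{A}|$. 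Every violation of the desired upper bound will be refuted by constructing a rainbow independent $(n-1)$-set of $(\mathcal{F},G)$, contradicting that $G$ is a counterexample.

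For the upper bounds I would argue uniformly: suppose the sum exceeds the claimed value by $s\ge 1$ and apply Lemma~\ref{LEM: 2cycles}(iii) with $\ell=s-1$ to partition $\mathcal{A}=\mathcal{A}^1\sqcup\mathcal{A}^2$. In the case $m_i<n_i$ with $s=1$, this gives $|\mathcal{A}^1|=m_i$, $h(\mathcal{A}^1,V_i)=m_i$, $|\mathcal{A}^2|=|\mathcal{A}|-m_i$, with $|I\cap\hat{V}_i|\ge|\mathcal{A}|-m_i+1$ for every $I\in\mathcal{A}^2$. Since $m_i<n_i$ forces $2n_i\ge 2m_i+1$, the assumed Conjecture~\ref{conj} produces a rainbow $m_i$-set on $V_i=C_{2n_i}$ from (size-$m_i$ subsets of) $\mathcal{A}^1$; minimality of $G$ applied to the strict subgraph $G[\hat{V}_i]$ (nonempty since $k\ge 3$) produces a rainbow $(|\mathcal{A}|-m_i)$-set on $\hat{V}_i$ from $\mathcal{A}^2$. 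Together with $R_k$ these give a rainbow $(n-1)$-set. The case $m_i=n_i$ with $s=2$ is entirely parallel with $\ell=1$: now $|\mathcal{A}^1|=n_i-1$ and $|\mathcal{A}^2|=|\mathcal{A}|-n_i+1$, and Conjecture~\ref{conj} applied to $(n_i-1)$-sets on $C_{2n_i}$ (still valid, since $2n_i\ge 2(n_i-1)+1$) combined with minimality on $G[\hat{V}_i]$ yields the same $n-1$ contradiction.

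The last and most subtle task is the strict lower bound $m_i+h(\mathcal{A}, \hat{V}_i)\ge|\mathcal{A}|+1$ when $m_i=n_i$. Assume the sum equals $|\mathcal{A}|$; apply Lemma~\ref{LEM: 2cycles}(i) with $\ell=0$ to get a partition with $|\mathcal{A}^1|=n_i$, then invoke Lemma~\ref{LEM: 2cycles}(ii) to conclude $|I\cap\hat{V}_i|\le|\mathcal{A}|-n_i$ for every $I\in\mathcal{A}^1$. Combining with $|I\cap V_i|\le n_i$, $|I\cap V_k|\le n_k$, and $|I|=n$ pins down $|I\cap V_i|=n_i$ and $|I\cap V_k|=n_k$, so $\mathcal{A}^1\subseteq\mathcal{F}_i\cap\mathcal{F}_k$. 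Since $\mathcal{A}=\mathcal{F}\setminus C_{R_k}$ with $C_{R_k}\subseteq\mathcal{F}_k$ and $|C_{R_k}|=n_k-1$, it follows that $|\mathcal{F}_k|\ge n_i+n_k-1\ge 2n_k-1$, which is exactly the Proposition~\ref{PROP: p1} threshold on $C_k=C_{2n_k}$: there is a rainbow independent $n_k$-set $R_k'$ of $(\mathcal{F}_k,C_k)$. Setting $\mathcal{A}'=\mathcal{F}\setminus C_{R_k'}$, every $I\in\mathcal{A}'$ satisfies $|I\cap V(G-V_k)|\ge n-n_k=|\mathcal{A}'|+1$, so minimality of $G$ applied to $G-V_k$ yields a rainbow independent $|\mathcal{A}'|$-set there; pasting with $R_k'$ gives the contradictory rainbow $(n-1)$-set of $(\mathcal{F},G)$.

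The main obstacle is this last argument: one has to notice that the hypothetical equality $m_i+h(\mathcal{A}, \hat{V}_i)=|\mathcal{A}|$ in the case $m_i=n_i$ forces sufficiently many sets of $\mathcal{A}$ to lie simultaneously in $\mathcal{F}_i$ and $\mathcal{F}_k$ via Lemma~\ref{LEM: 2cycles}(ii), which inflates $|\mathcal{F}_k|$ past the Proposition~\ref{PROP: p1} threshold $2n_k-1$ and allows the extraction of a rainbow $n_k$-set (rather than just $n_k-1$) on $C_k$; only then does minimality on $G-V_k$ close out the contradiction.
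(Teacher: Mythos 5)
Your proposal is correct and matches the paper's own argument in essentially every step: the trivial lower bound from Lemma~\ref{LEM: 2cycles}, the exclusion of $m_i+h(\mathcal{A},\hat{V_i})=|\mathcal{A}|$ when $m_i=n_i$ by forcing $\mathcal{A}^1\subseteq\mathcal{F}_k$, hence $|\mathcal{F}_k|\ge n_i+n_k-1\ge 2n_k-1$ and a rainbow $n_k$-set on $C_k$ via Proposition~\ref{PROP: p1} plus minimality on $G-V_k$, and the exclusion of any larger sum via Lemma~\ref{LEM: 2cycles}(i),(iii) with $\ell=0$ (resp. $\ell=1$), the assumed conjecture on $C_i$, and minimality on $G-V_k-V_i$, pasted with $R_k$ to reach $n-1$. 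The only differences are organizational (parametrizing the excess by $s$), so nothing further is needed.
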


\begin{proof}
We only prove the case $i=1$ and the other cases can be shown similarly.
If $m_1=n_1$ and $m_1+h(\mathcal{A}, \hat{V_1})=|\mathcal{A}|$, then by Lemma~\ref{LEM: 2cycles} (i) and (ii), $\mathcal{A}$ can be partitioned into $\mathcal{A}_1\cup \mathcal{A}_2$ such that $h(\mathcal{A}_1,V_1)=n_1$ and $h(\mathcal{A}_1,\hat{V_1})=|\mathcal{A}|-n_1$, furthermore, for any $I\in \mathcal{A}_1,$ we have $|I\cap\hat{V_1}|\leq |\mathcal{A}|-n_1$. Thus, for every $I\in\mathcal{A}_1$, $|I\cap V_k|=|I\cap(V_1\cup V_k)|-|I\cap V_1|\ge n-(|\mathcal{A}|-n_1)-n_1=n_k$, i.e.  $\mathcal{A}_1\subset \mathcal{F}_k$. Hence $| \mathcal{F}_k|\geq 2n_k-1$.
By Proposition~\ref{PROP: p1}, we can find a rainbow independent  $n_k$-set $R_k'$ of $(\mathcal{F}_k, C_k)$. Let $\mathcal{F}'=\mathcal{F}\setminus C_{R_k'}$. Then $|\mathcal{F}'|=n-1-n_k$ and $|I\cap V(G-V_k)|\geq n-n_k$ for every $I\in\mathcal{F}'$. By the minimality of $G$, there is a rainbow independent $(n-1-n_k)$-set $R'$ of $(\mathcal{F}', G-V_k)$. Therefore, $R_k'\cup R'$ is a rainbow independent $(n-1)$-set  of $(\mathcal{F}, G)$, a contradiction.

Now assume $m_1+h(\mathcal{A}, \hat{V_1})\geq|\mathcal{A}|+2$ if $m_1=n_1$, or  $m_1+h(\mathcal{A}, \hat{V_1})\geq|\mathcal{A}|+1$ if $m_1<n_1$. By applying Lemma~\ref{LEM: 2cycles} (i) and (iii) to $V_1\cup\hat{V_1}$, we can partition $\mathcal{A}$ into $\mathcal{A}_1\cup \mathcal{A}_2$ such that (1) $h(\mathcal{A}_1,V_1)=n_1-1$, $h(\mathcal{A}_2,\hat{V_1})=|\mathcal{A}|+1-n_1$ and for any $I\in \mathcal{A}_2$, $|I\cap\hat{V_1}|\geq |\mathcal{A}|+2-n_1$  if $m_1=n_1$ (choose $\ell=1$) (which implies that $|\mathcal{A}_1|=n_1-1$), or $h(\mathcal{A}_1,V_1)=m_1$, $h(\mathcal{A}_2,\hat{V_1})=|\mathcal{A}|-m_1$ and for any $I\in \mathcal{A}_2$, $|I\cap\hat{V_1}|\geq |\mathcal{A}|+1-m_1$  if $m_1<n_1$ (choose $\ell=0$) (which implies that $|\mathcal{A}_1|=m_1$). By the assumption $f_{C_s}(m,m)=m$ for $s\ge 2m+1$, we have a rainbow independent $|\mathcal{A}_1|$-set $R_1$ of $(\mathcal{A}_1, C_1)$. On the other hand, by the minimality of $G$, we have a rainbow independent $|\mathcal{A}_2|$-set $R_2$ of $(\mathcal{A}_2, G-V_k-V_1)$. Hence $R_1\cup R_2\cup R_k$ is a rainbow independent $(n-1)$-set of $(\mathcal{F}, G)$, a contradiction too.

\end{proof}

If $k=3$, then  $m_i+h(\mathcal{A}, \hat{V_{i}})=m_i+h(\mathcal{A}, V_{3-i})=m_i+m_{3-i}$ for $i=1,2$. So we have either  $m_i=n_i$ for $i=1,2$ or $m_i<n_i$ for $i=1,2$ by Claim~\ref{2part}. For the former case, we have $n_1+n_2=m_1+m_2=|\mathcal{A}|+1=n-n_3+1$. So $t=n_1+n_2+n_3-n=1$, which is a contradiction to $t\ge k-1$. For the latter case,  we have $m_1+m_2=|\mathcal{A}|=n-n_3$. Applying Lemma~\ref{LEM: 2cycles} (i) and (ii) to $V_1\cup V_2$, we can partition $\mathcal{A}$ into $\mathcal{A}_1\cup \mathcal{A}_2$ such that $h(\mathcal{A}_1,V_1)=m_1<n_1$ and $h(\mathcal{A}_2, V_2)=|\mathcal{A}|-m_1=m_2<n_2$. By the assumption $f_{C_s}(m,m)=m$ for $s\ge 2m+1$, we obtain  a rainbow  independent $m_i$-set $R_i$ of $(\mathcal{A}_i, C_i)$ for $i=1,2$. Thus $R_1\cup R_2\cup R_3$ is a rainbow independent $(n-1)$-set of $(\mathcal{F}, G)$, a contradiction.

Now, we assume $k=4$. Hence $t\geq k-1=3$. We distinguish two cases according to the relations of $m_i$ and $n_i$ for $i\in[3]$.

If there exists some $i\in[3]$, say $i=3$, such that $m_3<n_3$. By Claim~\ref{2part}, $|\mathcal{A}|=m_3+h(\mathcal{A},V_1\cup V_2)$. First, we claim that we can choose $\mathcal{A}$ (recall that $\mathcal{A}=\mathcal{F}\setminus C_{R_4}$) with $m_3=h(\mathcal{A}, V_3)=n_3-1$. To show this, we choose $C_{R_4}$ such that $|\mathcal{F}_3 \cap\mathcal{A}|$ is as large as possible.
If  $|\mathcal{F}_3 \cap \mathcal{A}|\ge n_3-1$, then $h(\mathcal{A}, V_3)\ge n_3-1$, we are done. So, assume $|\mathcal{F}_3 \cap \mathcal{A}|<n_3-1$.
By applying Lemma~\ref{LEM: 2cycles} (i) and (ii) on $V_3\cup\hat{V_3}$, we can partition $\mathcal{A}$ into $\mathcal{A}_3\cup \mathcal{A}_3'$ such that $h(\mathcal{A}_3,V_3)=m_3$ and $h(\mathcal{A}_3',V_1\cup V_2)=n-n_4-m_3$, moreover, we can choose an $I_0\in \mathcal{A}_3'$ such that $|I_0\cap (V_1\cup V_2)|=n-n_4-m_3$ and $|I_0\cap V_3|\leq m_3$. This implies that $I_0\in\mathcal{F}_4$ but $I_0\notin\mathcal{F}_3$.
Therefore, reset $\mathcal{A}$ by replacing $I_0$ with some set of $\mathcal{F}_3\cap C_{R_4}$, we obtain a new $\mathcal{A}$ with  larger $|\mathcal{F}_3 \cap\mathcal{A}|$, a contradiction.
Now we have $m_3=h(\mathcal{A}, V_3)=n_3-1$ and so $h(\mathcal{A},V_1\cup V_2)=|\mathcal{A}|-m_3=n_1+n_2-t+1$. Again applying Lemma~\ref{LEM: 2cycles} (i) and (ii) to $V_3\cup\hat{V_3}$, we can partition $\mathcal{A}$ into $\mathcal{A}_3$ and  $\mathcal{A}_3'$ with $h(\mathcal{A}_3, V_3)=n_3-1$ and  $h(\mathcal{A}_3',V_1\cup V_2)=n_1+n_2-t+1$, furthermore, for any $I\in\mathcal{A}_3'$, $|I\cap V_3|\le n_3-1$ (this also implies that $|\mathcal{A}_3|=n_3-1$ and $|\mathcal{A}_3'|=n_1+n_2-t+1$).
By the assumption $f_{C_s}(m,m)=m$ for $s\ge 2m+1$, we can find a rainbow independent $(n_3-1)$-sets $R_3$ of $(\mathcal{A}_3, C_3)$.
We claim that there is at least one of $h(\mathcal{A}_3', V_i),\  i=1,2$ with $h(\mathcal{A}_3', V_i)=n_i$. Otherwise, we have $m_i'=h(\mathcal{A}_3', V_i)<n_i,\ i=1,2$. Applying  Lemma~\ref{LEM: 2cycles} (i) to $V_1\cup V_2$, we have $m_1'+m_2'\ge |\mathcal{A}_3'|$
and  $\mathcal{A}_3'$ can be partitioned into $\mathcal{B}_1$ and $\mathcal{B}_2$ with $h(\mathcal{B}_1,V_1)=m_1'$ and $h(\mathcal{B}_2, V_2)=|\mathcal{A}_3'|-m_1'\le m_2'$ (choose $\ell=0$).
By the assumption $f_{C_s}(m,m)=m$ for $s\ge 2m+1$, we have a rainbow independent $m_1'$-set $R_1$ of $(\mathcal{B}_1, C_1)$ and a rainbow independent $(|\mathcal{A}_3'|-m_1')$-set of $(\mathcal{B}_2, C_2)$.
Note that $$\sum_{i=1}^4|R_i|=m_1'+|\mathcal{A}_3'|-m_1'+n_3-1+n_4-1=n-1.$$
Hence $R_1\cup R_2\cup R_3\cup R_4$ is a rainbow independent $(n-1)$-set of $(\mathcal{F}, G)$, a contradiction.
By this claim, without loss of generality, assume $h(\mathcal{A}_3', V_1)=n_1$ and so $h(\mathcal{A}_3',V_2)=n_2-t+1$.
Again by Lemma~\ref{LEM: 2cycles} (i) and (ii), $\mathcal{A}_3'$ can be partitioned into $\mathcal{B}_1$ and $\mathcal{B}_2$ with $h(\mathcal{B}_1, V_1)=n_1$ and  $h(\mathcal{B}_2,V_2)=n_2-t+1$, furthermore, $|I\cap V_1|= n_1$ and $|I\cap V_2|\leq n_2-t+1$ for any $I\in \mathcal{B}_1$.
Hence $|I\cap V_4|=n-\sum_{i=1}^3|I\cap V_i|\ge n-n_1-(n_2-t+1)-(n_3-1)=n_4$ for any $I\in\mathcal{B}_1$ and so $\mathcal{B}_1\subset \mathcal{F}_1\cap \mathcal{F}_4$. Therefore, we have $|\mathcal{F}_4|\geq n_1+n_4> 2n_4-1$. By Proposition~\ref{PROP: p1}, we can find a rainbow independent  $n_4$-set $R_4'$ of $(\mathcal{F}_4, C_4)$. Let $\mathcal{F}'=\mathcal{F}\setminus C_{R_4'}$. Then $|\mathcal{F}'|=n-1-n_4$ and $|I\cap (V_1\cup V_2\cup V_3)|= n-n_4$ for every $I\in\mathcal{F}'$. By the minimality of $G$, there is a rainbow independent $(n-1-n_4)$-set $R'$ of $(\mathcal{F}', C_1\cup C_2\cup C_3)$. Therefore, $R_4'\cup R'$ is a rainbow independent $(n-1)$-set  of $(\mathcal{F}, G)$, a contradiction too.

Now assume  $m_i=n_i$ for all $i\in[3]$.
By Corollary~\ref{no0} (A), there is a rainbow independent $(n_3-1)$-set $R_3$ of $(\mathcal{F}_3\cap \mathcal{A}, C_3)$.
 Let $\mathcal{B}=\mathcal{A}\setminus C_{R_3}$, we choose $C_{R_3}$ minimize $\max\{n_1-|\mathcal{B}\cap\mathcal{F}_1|,n_2-|\mathcal{B}\cap\mathcal{F}_2|\}$.
By Claim \ref{2part}, $m_3+h(\mathcal{A},V_1\cup V_2)=|\mathcal{A}|+1$, i.e., $h(\mathcal{A}, V_1\cup V_2)=|\mathcal{B}|=n_1+n_2-t+1$.
As discussed in the above case, there is one of $h(\mathcal{B},V_i), \ i=1,2$ with $h(\mathcal{B},V_i)=n_i$.
Without loss of generality, assume $h(\mathcal{B},V_1)=n_1$ and so $h(\mathcal{B},V_{2})=n_{2}-t+1$.  Hence, $n_1-|\mathcal{B}\cap\mathcal{F}_1|\leq 0$ and $n_2-|\mathcal{B}\cap\mathcal{F}_2|\geq t-1\geq 2$. By Lemma~\ref{LEM: 2cycles} (i) and (ii), $\mathcal{B}$ can be  partitioned into $\mathcal{B}_1$ and $\mathcal{B}_2$ with $h(\mathcal{B}_1, V_1)=n_1$ and $h(\mathcal{B}_2, V_2)=n_2-t+1$, furthermore, for any $I\in\mathcal{B}_1$, we have $|I\cap V_1|=n_1$ and  $|I\cap V_2|\le n_2-t+1$.
Hence, for any $I\in\mathcal{B}_1$, we have $I\in \mathcal{F}_3$ or $I\in \mathcal{F}_4$ but $I\notin\mathcal{F}_2$. If there is some $I\in \mathcal{F}_3$, reset $\mathcal{B}$ by replacing $I$ with some set in $\mathcal{F}_2\cap C_{R_3}$ (which is can be done since $|\mathcal{F}_2\cap C_{R_3}|=|\mathcal{A}\cap\mathcal{F}_2|-|\mathcal{B}\cap\mathcal{F}_2|\ge n_2-|\mathcal{B}\cap\mathcal{F}_2|\ge 2$), the resulting new set $\mathcal{B}$ has smaller $\max\{n_1-|\mathcal{B}\cap\mathcal{F}_1|, n_2-|\mathcal{B}\cap\mathcal{F}_2|\}$, a contradiction. So for any $I\in \mathcal{B}_1$, we have $I\in\mathcal{F}_4$, i.e., $\mathcal{B}_1\subset \mathcal{F}_4$. Thus $|\mathcal{F}_4|\geq n_1+n_4-1\ge 2n_4-1$.
With the same discussion with the end of the above case, we have a contradiction to the assumption that $(\mathcal{F}, G)$ has no rainbow  independent $(n-1)$-set.

\end{proof}

\section{Proof of Theorem~\ref{large}}
In this section, $C_t$ always denotes a cycle with vertex set $[t]$ and edge set $\{12, 23, \ldots, (t-1)t, t1\}$.


\begin{proof}[Proof of Theorem~\ref{large}]
Let $\mathcal{F}\sqsubseteq\mathcal{I}_n(C_t)$ with $|\mathcal{F}|=n$. We show that there is a rainbow independent $n$-set of  $(\mathcal{F}, C_t)$ if $t>\frac{1}{3}n^2+\frac{44}{9}n$.
Suppose to the contrary that $(C_t, \mathcal{F})$ has no rainbow independent $n$-set. Choose a member $J\in\mathcal{F}$. By Corollary~\ref{no0} (A), $( \mathcal{F}\setminus\{J\}, C_t)$ has a rainbow independent $(n-1)$-set $R$. Let $R'=R\cap N_{C_t}[J]$. Then $R'$ is a rainbow independent set of $( \mathcal{F}\setminus\{J\}, C_t)$ too. Choose such a $R$ with the smallest $|R'|$. We claim that $|R'|\ge\lceil\frac{n}{2}\rceil$. Otherwise, suppose $|R'|<\frac{n}{2}$. Since  $|N_{C_t}[i]\cap N_{C_t}[j]|\le 1$ for any two vertices $i,j\in J$ with $i\not=j$, every vertex of $R'$ is contained in at most two members of $\{N_{C_t}[i] : i\in J\}$. So there exists a $j_0\in J$ with $N_{C_t}[j_0]=[j_0-1,j_0+1]\cap R=\emptyset$. So we can enlarge $R$ by adding $j_0$ with color $J$ to $R$, a contradiction to the assumption.
Now let $F=C_t-N_{C_t}[J\cup R]$. By the definition of $R$ and $R'$,  $J\subseteq N_{C_t}[R']\subseteq N_{C_t}[R]$. So we have
$$|N_{C_t}[J\cup R]|=|N_{C_t}[J]|+|N_{C_t}[R]|-|N_{C_t}[J]\cap N_{C_t}[R]|\le 3n+3n-|J|=5n.$$
Recall that $C_{R'}$ is the  set of the corresponding colors  assigned to vertices in $R'$. We claim that $C_{\mathcal{F}}(i)\cap C_{R'}=\emptyset$ for any $i\in V(F)$.
If not, assume there is an $I\in C_{\mathcal{F}}(i)\cap C_{R'}$ for some $i\in V(F)$ and assume $I$ is the color of $j$ in $R'$, i.e. $C_{R'}(j)=\{I\}$.
Let $\tilde{R}=(R\setminus\{j\})\cup \{i\}$. Then $\tilde{R}'=\tilde{R}\cap N_{C_t}[J]=R'\setminus\{j\}$,
a contradiction to the minimality of $|R'|$.
This claim implies that for any $I\in C_{R'}\cup\{J\}$, we have $I\subseteq N_{C_t}[J\cup R]$. Let $A=\{(i,I) :  i\in I \mbox{ and } I\in\mathcal{F}\}$, $B=\{(i,I) : i\in V(F) \mbox{ and } i\in I, I\in\mathcal{F}\}$ and $C=\{(i,I) : i\notin V(F) \mbox{ and } i\in I, I\in\mathcal{F}\}$. So, with a double-counting argument, we have
\begin{equation}\label{EQ:e1}
{|B|=\sum_{i\in V(F)}c_{\mathcal{F}}(i)=|A|-|C|\le n^2-n|C_{R'}\cup\{J\}|\le\frac{1}{2}n^2-n\mbox{,}}
\end{equation}
where the inequality holds since $|C_{R'}|=|R'|\ge \lceil\frac n2\rceil$.
Note that $N_{C_t}[J\cup R]$ is a union of intervals of length at least $3$ and $|N_{C_t}[J\cup R]|\le 5n$. So $N_{C_t}[J\cup R]$ consists of at most $\frac{5n}{3}$ intervals. This implies that $F=C_t-N_{C_t}[J\cup R]$ has at least $t-5n$ vertices and consists of $m\le\frac{5n}{3}$ paths, say $P_1,P_2,\ldots,P_m$. By Corollary~\ref{no0} (B1), $c_{\mathcal{F}}(a)\ge1$ for any $a\in[1,t]$. By Corollary~\ref{no0} (B3),
every path $P_j$ contains at most $\frac{|V(P_j)|+1}2$ vertices $a$ with $c_{\mathcal{F}}(a)=1$.
Hence,
\begin{equation}\label{EQ: e2}
\begin{split}
\sum_{i\in F}c_{\mathcal{F}}(i) &=\sum_{j=1}^m\sum_{i\in V(P_j)}c_{\mathcal{F}}(i) \\
 &\ge \sum_{j=1}^m\left(2|V(P_j)|-\frac{|V(P_j)|+1}{2}\right)\\
 &=\frac32\sum_{j=1}^m|V(P_j)|-\frac {m}{2}\\
 &\ge\frac32(t-5n)-\frac{m}{2}\\
&\ge\frac{3}{2}t-\frac{25}{3}n\mbox{.}
\end{split}
\end{equation}
From (\ref{EQ:e1}) and (\ref{EQ: e2}), we have
$$\frac{3}{2}t-\frac{25}{3}n\le\sum_{i\in F}c_{\mathcal{F}}(i)\le\frac{1}{2}n^2-n\mbox{,}$$
i.e. $t\le\frac{1}{3}n^2+\frac{44}{9}n$, a contradiction.
\end{proof}

\section{Proof of Theorem~\ref{+2}}
Let $\mathbb{Z}_t$ be the additive group of remainder of modulo $t$.
For  $a,b \in\mathbb{Z}_t$, $a>b$ means $a\pmod t> b\pmod t$.
For two sequences $(a_1,\ldots,a_n), (b_1,\ldots,b_n)\in \mathbb{Z}_t^n$, $(a_1,\ldots,a_n)>(b_1,\ldots, b_n)$ if and only if there exists some $i\in[n]$ such that $a_i>b_i$ and $a_j=b_j$ for all $j<i$.

Throughout this section, let  $C_t$ be  a cycle with vertex set $\mathbb{Z}_t$  and edge set $\{01,12,...,(t-1)0\}$. If $t=2n+1$, by Theorem~\ref{THM: Lu}, the result holds. So we assume $t\ge 2n+2$ in the following.
Let $\mathcal{F}=(B_1,\ldots,B_n)\sqsubseteq\mathcal{I}_n^2(C_t)$.
Choose a maximal rainbow independent set $A =\{a_1,a_2,\ldots,a_r\}$ of $(C_t, \mathcal{F})$ with the property that
$a_1,\ldots, a_r$ are in a clockwise order in $C_t$ and
$D_A=(a_{r}-a_1,a_2-a_1, a_3-a_2,\ldots,a_r-a_{r-1})$ is minimal.
Without loss of generality, assume $C_A=\{B_1, B_2, \ldots, B_r\}$ and $a_1 = 1$. Then we have $a_1<a_2<\ldots<a_r$ by the assumption of $A$. It is sufficient to show that $r=n$.
By Corollary~\ref{no0} (A), we know $r\ge n-1$.
Suppose $r=n-1$.
Define  $A_i=\{a_i, a_i+1\}$ for $i=1,2,\ldots, n-1$.
We have the following claims.

\begin{claim}\label{claim:1}
  $B_n \subseteq A\cup(A+1)\cup \{ a_1-1\}$. Moreover, $0\in B_n$ and $|B_n\cap A_i|=1$ for each $i\in[n-1]$.
\end{claim}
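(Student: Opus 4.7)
The plan is to establish the claim in two pieces: the set containment first, then the moreover part by a disjointness/pigeonhole argument.

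For the containment $B_n \subseteq A \cup (A+1) \cup \{a_1 - 1\}$: since $C_A = \{B_1, \ldots, B_{n-1}\}$ uses every color except $B_n$, the maximality of $A$ forces each $i \in B_n$ into $A \cup (A-1) \cup (A+1)$, for otherwise $A \cup \{i\}$ coloured with $B_n$ would be a strictly larger rainbow independent set. It remains to exclude $i = a_j - 1$ for $j \geq 2$. If $a_j - a_{j-1} = 2$ then $a_j - 1 = a_{j-1} + 1 \in A + 1$ and we are done. Otherwise $a_j - a_{j-1} \geq 3$, and I would form $A' = (A \setminus \{a_j\}) \cup \{a_j - 1\}$, assigning $B_n$ to $a_j - 1$ and keeping the other colors intact. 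The only possible adjacency inside $A'$ is $a_j - 2 = a_k$ for some $k \neq j$, which forces $k = j-1$ and $a_j - a_{j-1} = 2$ (excluded); when $j = r$ one additionally needs $a_r - 1 \not\equiv a_1 \pm 1 \pmod t$, which is automatic from $a_r - a_1 \geq a_r - a_{r-1} \geq 3$. Hence $A'$ is a rainbow independent set of size $r$. Comparing with $D_A$: if $j = r$ the first coordinate drops from $a_r - a_1$ to $a_r - 1 - a_1$; if $2 \leq j < r$ the first coordinate matches while the $j$-th coordinate $a_j - a_{j-1}$ drops to $a_j - 1 - a_{j-1}$ with all earlier entries unchanged. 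In either case $D_{A'} < D_A$ lexicographically, contradicting the minimality of $D_A$.

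For the moreover part, observe that the sets $A_i = \{a_i, a_i + 1\}$, $i \in [n-1]$, are pairwise disjoint (by independence of $A$) and $\bigcup_i A_i = A \cup (A+1)$. Moreover $|B_n \cap A_i| \leq 1$ for every $i$, because $A_i$ consists of two cyclically consecutive vertices while any two elements of the $2$-jump set $B_n$ are separated by at least $2$ modulo $t$. I first rule out $a_r = t-1$: in that case $0 = a_1 - 1 \in A_r$, so the containment already gives $B_n \subseteq \bigcup_i A_i$ and hence $|B_n| \leq n-1 < n$, a contradiction. Therefore $0 \notin \bigcup_i A_i$ and $\{0\} \sqcup A_1 \sqcup \cdots \sqcup A_{n-1}$ is a disjoint union of $2n-1$ vertices containing $B_n$. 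The resulting bound $|B_n| \leq 1 + (n-1) = n$ must be tight, forcing $0 \in B_n$ and $|B_n \cap A_i| = 1$ for every $i \in [n-1]$.

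The main obstacle is executing the replacement step cleanly in the cyclic setting: one has to verify that swapping $a_j$ for $a_j - 1$ really produces a rainbow independent set (including ruling out wrap-around adjacencies when $j = r$), and then that the resulting sequence $D_{A'}$ is strictly smaller than $D_A$. Once the gap condition $a_j - a_{j-1} \geq 3$ is in place both checks are routine, after which the counting step and the pigeonhole conclusion are immediate.
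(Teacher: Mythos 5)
Your proposal is correct and follows essentially the same route as the paper: maximality of $A$ confines $B_n$ to $A\cup(A\pm1)$, the swap $a_j\mapsto a_j-1$ (coloured $B_n$) strictly decreases $D_A$ lexicographically to exclude $(A-1)\setminus\{a_1-1\}$, and then a counting/pigeonhole argument over the disjoint pairs $A_i$ plus the single extra vertex $0$ forces $0\in B_n$ and $|B_n\cap A_i|=1$ for all $i$. Your treatment is in fact slightly more careful than the paper's (explicitly checking the wrap-around adjacency for $j=r$ and ruling out $a_r=t-1$ before the final count), but it is the same argument.
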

\begin{proof}
	If there exists one $b\in B_n$ such that $b\notin A\cup(A+1)\cup(A-1)$, then $ \{b\}\cup A$ is a larger rainbow independent set than $A$, a contradiction. Now suppose that there exists one $b\in B_n$ and an $i\in[2,r]$ with $b=a_i - 1$ but $b \ne a_{i-1}+1$. Then we can replace $a_i$ by $b$ in $A$ and get another  rainbow independent set $\{a_1, \ldots, a_{i-1},b,a_{i+1},\ldots, a_r\}$ of $(\mathcal{F}, C_t)$ with $b-a_{i-1}<a_i-a_{i-1}$, a contradiction to the minimality of $D_A$. So $B_n \subseteq A\cup(A+1)\cup \{a_1-1\}$.

Note that $A\cup(A+1)$ consists of exactly two independent sets of size $n-1$. Since $|B_n|=n$, we have $a_1-1\in B_n$, i.e. $0\in B_n$ and $|B_n\cap\{a_i, a_i+1\}|=1$ for any $i\in[n-1]$.
\end{proof}

\begin{claim}\label{claim:3}
If $C_A(a_i)=B_{k_i}$ and $B_n\in C_{\mathcal{F}}(a_i)$ for some $i\ge 2$, then $B_{k_i}=B_n$.
\end{claim}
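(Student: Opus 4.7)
I plan to argue by contradiction: suppose $B_{k_i}\ne B_n$. Since $a_i\in B_n$ and $B_n\notin C_A$, I can form a new rainbow independent $(n-1)$-set $A''$ on the same vertex set as $A$ by reassigning the color of $a_i$ from $B_{k_i}$ to $B_n$; the unused color of $A''$ is then $B_{k_i}$. Because $A$ and $A''$ share a vertex set, $D_{A''}=D_A$, so $A''$ still minimizes the $D$-vector among rainbow independent $(n-1)$-sets.

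I would next rerun the proof of Claim~\ref{claim:1} verbatim on $A''$ with unused color $B_{k_i}$ in place of $B_n$. The maximality step (producing a rainbow independent $n$-set from any $b\in B_{k_i}\setminus(A\cup(A-1)\cup(A+1))$, contradicting $r=n-1$) and the minimality-of-$D$ step (producing a rainbow independent $(n-1)$-set with strictly smaller $D$-vector from any $b=a_j-1\ne a_{j-1}+1\in B_{k_i}$ with $j\ge 2$, by replacing $a_j$ with $b$ in $A''$) both go through unchanged. This yields $B_{k_i}\subseteq A\cup(A+1)\cup\{0\}$, $0\in B_{k_i}$, and $|B_{k_i}\cap A_j|=1$ for every $j\in[n-1]$, exactly mirroring the conclusions of Claim~\ref{claim:1} for $B_n$.

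Finally I would show that, for a fixed $A$ with $a_1=1$ and $t\ge 2n+2$, at most one 2-jump $n$-set can satisfy these structural conditions. Any 2-jump $n$-set $B$ containing $0$ has the form $B=\{-2j,-2j+2,\ldots,2(n-1-j)\}\pmod{t}$ for some $j\in[0,n-1]$, which in cyclic order from $0$ reads $0,2,\ldots,2(n-1-j),t-2j,t-2j+2,\ldots,t-2$. The conditions $B\subseteq A\cup(A+1)\cup\{0\}$ and $|B\cap A_k|=1$ then force $a_k\in\{2k-1,2k\}$ for $k\le n-1-j$ and $a_k\in\{t-2(n-k)-1,t-2(n-k)\}$ for $k\ge n-j$. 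Since $2k<t-2(n-k)-1$ for every $k\in[n-1]$ whenever $t\ge 2n+2$, these two candidate ranges are disjoint for each $k$, so at most one value of $j$ is compatible with the given $A$. Hence $B_n$ and $B_{k_i}$ must coincide, contradicting our assumption. The main obstacle is verifying that both halves of the Claim~\ref{claim:1} argument transfer faithfully to $A''$; this works precisely because $A''$ and $A$ have identical vertex sets (hence identical $D$-vectors), so any vertex-extension or vertex-exchange that would refute Claim~\ref{claim:1} for $A''$ equally refutes the original maximality of $r=n-1$ or the minimality of $D_A$.
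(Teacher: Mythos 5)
Your proof is correct and takes essentially the same approach as the paper: recolor $a_i$ with $B_n$ so that the argument of Claim~\ref{claim:1} applies verbatim to the now-unused color $B_{k_i}$ (the unchanged vertex set preserves both maximality and the minimality of the $D$-vector), and then derive a contradiction from two distinct 2-jump $n$-sets containing $0$ each meeting every $A_\ell$ exactly once. The only difference is cosmetic: the paper finishes the uniqueness step by exhibiting a specific element ($j-2\in B_{k_i}$) lying in no $A_\ell$, whereas you argue globally that the structural conditions force the positions of the $a_k$ and hence determine the set uniquely when $t\ge 2n+2$; the two finishes are equivalent.
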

\begin{proof}

If $B_{k_i}\not=B_n$, then we can reset the color of $a_i$ by $B_n$ and apply Claim~\ref{claim:1} to $B_{k_i}$, we have $0\in B_{k_i}$ and $|B_{k_i}\cap A_\ell|=1$ for every $\ell\in[n-1]$. Since $B_{k_i}\ne B_n$, we have $s(B_{k_i})\not=s(B_n)$. Without loss of generality, we assume $0\le s(B_{k_i})<s(B_n)$.
Let $B_n=(j, j+2,\ldots, 0, 2,\ldots,  2n-t+j-2)$. Then $j>0$ and $2n-t+j-2<j$ (otherwise, we have $t\le 2n-2$, a contradiction).
So, by Claim~\ref{claim:1}, $t-2$ must be in $A_{n-1}$ and $j$ must be in $A_s$, where $s=n-\frac{t-j}2$.
If $s(B_{k_i})=0$ then $B_{k_i}=(0, 2, \ldots, 2n-2)$.
So we have $2n-2\in A_{n-1}$, i.e. $a_{n-1}=2n-3$ or $2n-2$. In any case, we have $t\le 2n+1$, a contradiction.
Now assume $s(B_{k_i})=h\not=0$. Then $B_{k_i}=\{h, h+2, \ldots, 0, 2, \ldots, 2n-t+h-2\}$.
Hence $h$ and $j$ have the same parity. So $j-2\in B_{k_i}$ but $j-2\notin B_n$ because $j-(2n-t+j-2)=t-2n+2\ge 4$.
Since $j\in B_n\cap A_s$, we have $2n-t+j-2$ must be in $B_n\cap A_{s-1}$. Since $A_s\cap A_{s-1}=\emptyset$ and $2n-t+j-2<j-2<j$, there is no $A_{\ell}$ containing $j-2$ for $\ell\in [n-1]$. This is a contradiction  to $|B_{k_i}\cap A_{\ell}|=1$ for all $\ell\in [n-1]$.
\end{proof}
If $s(B_n)=0$, i.e., $B_n = \{0,2,\ldots,2n-2\}$.
Since $A$ is an independent set with $a_1=1\in A$ and $|B_n\cap \{a_i,a_{i}+1\}|= 1$ for every $i\in[n-1]$, there exists an integer $k$ with $1<k\le n$ such that $a_i = 2i-1$ for $1\le i< k$ and  $a_i=2i$ for $i\ge k$ (if $k<n$).
By Claim~\ref{claim:3}, we have $ B_i = B_n$ for all $i\ge k$. Clearly, $2n\notin B_i$ for all $i\in [k, n]$.
If $t=2n+2$, since $a_i=2i-1\in B_i$ and $B_i$ is 2-jump, we have  $2n\notin B_i$ for all $i\in [k-1]$. Therefore, $2n\notin B_i$ for all $i\in [n]$, which is a contradiction to Corollary~\ref{no0} (B1).
If $t> 2n+2$, then $t-3 > 2n-1>a_{n-1}$. Clearly, $t-3\notin B_n $.  Let $\ell \le k-1 $ be the minimal number with $t-3 \in B_\ell$ (If such $\ell$ does not exist, then $t-3\notin B_i$ for all $i\in [n]$, a contradiction to Corollary~\ref{no0} (B1), too). Since $B_i\in\mathcal{I}_n^2(C_t)$  with $a_i=2i-1\in B_i$ and $t-3\notin B_i$ for any $i< \ell$, we have $a_i +2 = a_{i+1} \in B_i$. So we can recolor $a_{i+1}$ with $B_i$ for all $i\in[\ell-1]$, remain the color of $a_i$ unchanged for $i\in[\ell+1, n-1]$ and color $t-3$ with $B_\ell$ and 0 with $B_n$, i.e. $(A\setminus \{1\})\cup\{0, t-3\} $ is a rainbow independent $n$-set of $(\mathcal{F}, C_t)$, a contradiction.

Now suppose $s(B_n)= -2(n-m)$ for some $n> m>0$, i.e., $B_n =\{t-2(n-m),\ldots, t-2, 0, 2, \ldots, 2m-2\}$.
If $t = 2n+2$, then $B_n=\{2m+2,\ldots, 2n, 0, 2, \ldots, 2m-2\}$. Clearly, $2m-1\notin A$ (otherwise, $B_n\cap\{2m-1, 2m\}=\emptyset$, a contradiction to Claim~\ref{claim:1}). With the same reason as the case $s(B_n)=0$, we have an integer $k$ with $1<k\le m$ such that $a_i = 2i-1$ for any $i\in[k-1]$ and  $a_i=2i$ for any $i\in [k, m-1]$.
With the same discussion with the case $s(B_n)=0$, we have $2m\notin B_i$ for all $i\in [m-1]$.
For $i=m$, we have $a_m\not=2m$ (otherwise, $B_n\cap\{a_m, a_m+1\}=\emptyset$, a contradiction to Claim~\ref{claim:1}).
So $a_i>2m$ for any $i\in[m, n-1]$.
Since $B_j$ is 2-jump, all elements of $B_j$ have the same parity. By Claim~\ref{claim:3}, $B_j=B_n$ if $a_j\in B_j$ is an even numbers when $j\in [m,n-1]$.
So $2m\notin B_j$ for any $j\in[m,n-1]$.
Therefore, we have $2m\notin B_i$ for all  $i\in [n]$, a contradiction to Corollary~\ref{no0} (B1).
If $t> 2n+2$, then by Claim~\ref{claim:1}, we have $a_{i}\in \{t-2n+2i , t-2n+2i-1\}$ for $i\in[m, n-1]$, and $a_i \in\{ 2i-1, 2i\}$ for $i\in[2,m-1]$. In particular we have $a_{n-1} \in\{-2,-3\}$. If $a_{n-1}=-2$, then $a_1-a_{n-1}=3$ and  $a_{m}-a_{m-1}\ge t-2n+2m-1- 2(m-1)>3$, we have a contradiction to the minimality of $D_A$ by reordering $A$ with $A = \{a_{m},a_{m+1} ...,a_{n-1}, a_1,...a_{m-1}\}$. So we have $a_{n-1} = -3$, which implies that  $a_{i}=t-2n+2i-1$ for every $i\in[m, n-1]$. In particular, $a_m=t-2(n-m)-1\in B_m$. Note that $a_m-2=t-2(n-m)-3> 2m-1>a_{m-1}$. By the minimality of $D_A$, we have $-2(n-m)-3 \notin B_m$ (otherwise, we can reduce $a_m-a_{m-1}$ by replacing $a_m$ with $a_m-2$ in $A$). Since $B_m$ is 2-jump, we have  $s(B_m)=t-2(n-m)-1$ and so $-1 \in B_m$.
Then $A'= (A\setminus \{ a_m\})\cup \{ -1 \} = \{a_{m+1}, \ldots, a_{n-1}, -1, a_1, \ldots, a_{m-1}\}$ is a rainbow independent set of $(\mathcal{F}, C_t)$ with $a_{m+1}-a_{m-1}\ge t-2n+2m+1-2(m-1)>5>4=a_{1}-a_{n-1}$, a contradiction to the minimality of $D_A$.

This completes the proof of  Theorem~\ref{+2}.
\section{Concluding remarks and discussions}
In this article, we show that (1) Conjecture~\ref{CONJ: 1} is true when $|V(G)|\in\{ 2n-1, 2n\}$ and (2) if Conjecture~\ref{conj} is ture then Conjecture~\ref{CONJ: 1} holds for graphs $G\in\mathcal{D}(2)$ with $c_e(G)\le 3$. We believe that Conjecture~\ref{conj} implies Conjecture~\ref{CONJ: 1}, we leave this as an open problem.
For Conjecture~\ref{conj}, it has been shown that this conjecture is true for the base case (Theorem~\ref{THM: Lu}) and for large $t$ (Theorem~\ref{large}).
Could we show the rest case of Conjecture~\ref{conj}?

\end{document}